\numberwithin{equation}{section}
\newtheorem{theorem}{Theorem}[section]
\newtheorem{lemma}[theorem]{Lemma}
\newtheorem{proposition}[theorem]{Proposition}
\newtheorem{corollary}[theorem]{Corollary}
\newtheorem{conjecture}[theorem]{Conjecture}
\theoremstyle{definition}
\newtheorem{df}{Definition}
\newtheorem{example}[df]{Example}
\newcommand{\N}{\mathbb N}
\newcommand{\R}{\mathbb R}
\newcommand{\ha}{\symbol{94}}
\newcommand{\ve}{\varepsilon}
\newcommand{\on}{\operatorname}
\newcommand{\wh}{\widehat}
\subjclass[2020]{05B10, 28A05} 
\keywords{Cantor sets, algebraic difference of sets, Newhouse gap lemma}
\begin{document}
\author{Piotr Nowakowski}
\address{Faculty of Mathematics and Computer Science, University of \L \'{o}d\'{z},
Banacha 22, 90-238 \L \'{o}d\'{z}, Poland}
\address{Institute of Mathematics, Czech Academy of Sciences,
\v{Z}itn\'a 25, 115 67 Prague 1, Czech Republic\\
ORCID: 0000-0002-3655-4991}
\email{piotr.nowakowski@wmii.uni.lodz.pl}

\title{When the algebraic difference of two central Cantor sets is an interval?}
\date{}

\begin{abstract}
Let $C(a ),C(b)\subset \lbrack 0,1]$ be the central Cantor sets
generated by sequences $a,b \in \left( 0,1%
\right) ^{\mathbb{N}}$. The first main
result of the paper gives a necessary and a sufficient condition for sequences $a$ and $b$ which inform when $C(a )-C(b)$ is equal to $[-1,1]$ or is a finite union of closed intervals. One of the corollaries following from this results shows that the product of thicknesses of two central Cantor sets which algebraic difference is an interval may be arbitrarily small. We also show that there are sets $C(a)$ and $C(b)$ with the Hausdorff dimension equal to $0$ such that their algebraic difference is an interval. 
Finally, we give a full characterization of the case, when $C(a )-C(b)$ is equal to $[-1,1]$ or is a finite union of closed intervals. 
\end{abstract}

\maketitle

\section{Introduction}
For $A,B\subset \mathbb{R}$, we denote by $A\pm B$ the set $\left\{ a\pm
b:a\in A,\,b\in B\right\} $. The set $A- B$ is called the algebraic difference of sets $A$ and $B$. The set $A-A$ is called the difference set of a set $A$. We will also write $a+A$ instead $\{a\} + A$ for $a \in \R$. 
If $I \subset \R$ is an interval, then by $l(I)$, $r(I)$ we will denote, respectively, the left and the right endpoint of $I$.

By a Cantor set we mean a nonempty, bounded, perfect and nowhere dense subset of $\R$.

Given any set $C \subset \R$, every bounded component of 
the set $\R \setminus C$ is called a gap of $C$. A component of $C$ is called proper if it is not a singleton.

Let us recall the definition of a Cantorval (specifically, an M-Cantorval). A perfect set $E \subset \R$ is called a Cantorval if it has infinitely many gaps and both endpoints of any gap are accumulated by gaps and proper components of $E$.

Algebraic differences and sums of Cantor sets were considered by many authors (e.g. \cite{MO}, \cite{T19}, \cite{K1}, \cite{HKY}, \cite{N}, \cite{H}, \cite{AC}, \cite{S}, \cite{FF}). They appear for example in dynamical systems (see \cite{PT}), spectral theory (see \cite{DGS},\cite{T16}) or number theory (see \cite{H}). One of the most known results is the Newhouse gap lemma from \cite{N} (see Theorem \ref{g1}). This is a condition which implies that the sum of two Cantor sets is an interval.
Also other mathematical operations on Cantor sets are in the area of interests of many mathematicians (see e.g. \cite{ART}, \cite{J}). The most popular types of examined Cantor sets are central Cantor sets. In our paper we will continue this trend.  

Every central Cantor subset $C$ of $[0,1]$ can be uniquely described by a sequence $a=(a_n)\in (0,1)^\N$ (the details are given
in Section 2). We then say that $C$ is generated by $a$ and write $C=C(a)$. If $a$ is a constant sequence with all terms equal to $\alpha$, then we say that the set $C(a)$ is a middle-$\alpha$ Cantor set or just a middle Cantor if $\alpha$ is not given. The algebraic difference of two central Cantor sets can be either a Cantor set, a finite union of closed intervals or a Cantorval
(see \cite{AI}).
In the paper \cite{K}, Kraft proved that the difference set of a middle-$\alpha$ Cantor set is equal to $[-1,1]$ if $\alpha \leq \frac{1}{3}$, and is a Cantor set if $\alpha > \frac{1}{3}$.
Later, it was proved (see \cite{AI}, \cite{FN}) that the difference set of a central Cantor set $C(a)$, where $a \in (0,1)^{\N}$, is equal to $[-1,1]$ if and only if $a_n \leq \frac{1}{3}$ for all $n \in \N$. In \cite{S}, the author proved that if $a_n > \frac{1}{3}$ for all $n \in \N$, then the difference set of $C(a)$ is a Cantor set. In \cite{FN} there was given a condition, which implies that the set $C(a)-C(a)$ is a Cantorval. In our paper, we will focus only on the case, when the algebraic difference of two central Cantor sets is a finite union of closed intervals. Although the characterization of this case is known for the difference $C(a)-C(a)$, there was not such characterization for the sets of the form $C(a)-C(b)$ for arbitrary sequences $a, b \in (0,1)^\N$. There are only some partial results like Newhouse gap lemma or theorem of Pourbarat from \cite{P}, which gives an equivalent condition for the algebraic difference of middle Cantor sets to be an interval (see also Corollary \ref{WnP}). There are also some results for more general Cantor sets. In \cite{LFGZJ} there are considered continuous images of pairs of Moran sets. Results obtained there are very interesting, but if we use them for the case of the algebraic difference of central Cantor sets, we will only obtain the known characterization for the set $C(a)-C(a)$ to be an interval. 
The main goal of this paper is to give a characterization of the case, where the set $C(a)-C(b)$ is a finite union of intervals. 

In Section 2, we start from proving a weaker theorem which concerns the algebraic difference of two different central Cantor sets, but which is easier to apply than the characterization given later. It is a generalization of the earlier mentioned theorem concerning the difference set of a central Cantor set. It also gives examples of central Cantor sets which do not satisfy the assumptions of the Newhouse gap lemma (Theorem \ref{g1}), and still their algebraic difference is equal to $[-1,1]$. We also show that there are central Cantor sets $C(a)$, $C(b)$ with Hausdorff dimension equal to $0$ such that $C(a)-C(b) =[-1,1]$. In Section 3. we give a full characterization of the case, when the set $C(a)-C(b)$ is a finite union of intervals along with corollaries concerning middle Cantor sets.

\section{Sufficient and necessary conditions for the algebraic difference of central Cantor sets to be an interval}
Let us recall the construction of a central Cantor subset of $[0,1]$ (see e.g. \cite{BFN}).

An interval $I$ is called concentric with 
an interval $J$ if they have a common centre.

Let $a = (a_n)$ be a sequence such that $a_n \in (0,1)$ for any $n \in 
\mathbb{N}$ and $I := [0, 1]$. In the first step of the construction, we remove from $I$ the
open interval $P$ centred at $\frac{1}{2}$ of length $a_1$. Then by $I_0$ and $I_1$ we denote, respectively, the left and the right components of $I\setminus P$ (each of length $d_1=\frac{1-a_1}{2}$). Generally, assume that for some $n \in \N$ and $t_1, t_2, \dots, t_n \in \{0,1\}$ we have constructed 
the interval $I_{t_1,\dots , t_n}$ of length $d_{n}$. Denote by $P_{t_1, \dots, t_n}$
 the open interval of length $a_{n+1} d_{n}$, concentric with $I_{t_1,\dots , t_n}$. 
 Now, let $I_{t_1, \dots, t_n, 0}$ and $I_{t_1, \dots , t_n, 1}$
be, respectively, the left and the right components of the set 
$I_{t_1, \dots , t_n} \setminus P_{t_1, \dots , t_n}$. 
By $d_{n+1}$ denote the common length
of both components. 

For every $n \in \mathbb{N}$, denote 
\begin{equation*}
\mathcal{I}_n:=\{I_{t_1, \dots ,t_n}\colon (t_1,\dots
,t_n)\in\{0,1\}^n\}\quad\mbox{ and } \quad C_n(a):= \bigcup \mathcal{I}_n.
\end{equation*}
Let $C(a): = \bigcap_{n\in\mathbb{N}}C_n(a)$. Then $C(a)$ is called a 
central Cantor set. 

Define the thickness of a Cantor set $C$ by (see \cite{Ta})
\begin{equation*}
\tau (C)=\inf_{G_{1}<G_{2}}\max \left\{\frac{l(G_{2})-r(G_{1})}{|G_{1}|}, \;\frac{%
l(G_{2})-r(G_{1})}{|G_{2}|}\right\},
\end{equation*}%
where $G_{1}$, $G_{2}$ are gaps of the set $C$, and $G_{1}<G_{2}$
means that $G_{1}$ is on the left of $G_{2}$.

In the case of central Cantor sets, we have an explicit formula for the thickness, if gaps which appear in any step of the construction are shorter than those which appeared on earlier steps.
\begin{lemma}
\label{l2} \cite{HKY}
If for any $n \in \N$, $a_{n+1} < \frac{2a_n}{1-a_n}$, then
$\tau (C((a_n))) = \inf\limits_{n \in \mathbb{N}} \frac{1-a_n}{2a_n}$.
\end{lemma}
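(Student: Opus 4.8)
The plan is to prove the two inequalities $\tau(C(a))\le\inf_{n}\frac{1-a_{n}}{2a_{n}}$ and $\tau(C(a))\ge\inf_{n}\frac{1-a_{n}}{2a_{n}}$ separately. First I would fix notation: put $d_{0}:=1$, so that $d_{n}=\prod_{k=1}^{n}\frac{1-a_{k}}{2}$ is the common length of the intervals in $\mathcal{I}_{n}$, and let $g_{n}:=a_{n}d_{n-1}$ be the common length of the gaps $P_{t_{1},\dots,t_{n-1}}$ removed at the $n$-th step. The whole argument rests on the identity
\[
\frac{d_{n}}{g_{n}}=\frac{(1-a_{n})d_{n-1}/2}{a_{n}d_{n-1}}=\frac{1-a_{n}}{2a_{n}}=:r_{n},
\]
together with the recursion $d_{n}=\tfrac{1-a_{n}}{2}\,d_{n-1}$. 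I would work with the bridge form of thickness: for a gap $U$ set $B^{-}(U):=[\,r(\widehat U),l(U)\,]$, where $\widehat U$ is the rightmost complementary component of $C(a)$ of length $\ge|U|$ lying entirely to the left of $U$, and $r(\widehat U):=\min C(a)=0$ if there is no such component; define $B^{+}(U)$ symmetrically; then $\tau(C(a))=\inf_{U}\min\{|B^{-}(U)|,|B^{+}(U)|\}/|U|$ — so a bridge may abut an endpoint of $[0,1]$.

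For the upper bound, fix $n$ and take $U$ to be the left-most gap removed at step $n$, i.e.\ the central gap of $I_{0,\dots,0}=[0,d_{n-1}]$; then $|U|=g_{n}$, and since its left child $I_{0,\dots,0,0}$ equals $[0,d_{n}]$ we have $l(U)=d_{n}$. As $B^{-}(U)$ has right endpoint $d_{n}$ and cannot extend below $0=\min C(a)$, it is contained in $[0,d_{n}]$; hence $\tau(C(a))\le|B^{-}(U)|/|U|\le d_{n}/g_{n}=r_{n}$, and since $n$ was arbitrary, $\tau(C(a))\le\inf_{n}r_{n}$.

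For the lower bound I would show $|B^{-}(U)|/|U|\ge\inf_{k}r_{k}$ for every gap $U$ (the right bridge being symmetric). Say $U$ is removed at step $m$, so $|U|=g_{m}$. If there is no $\widehat U$, then $|B^{-}(U)|=l(U)\ge d_{m}$, because a step-$m$ gap is separated from $0$ by its left child alone, which has length $d_{m}$; thus the ratio is $\ge r_{m}$. Otherwise let $J$ be the smallest construction interval containing both $\widehat U$ and $U$, and let $V,J_{0},J_{1}$ be respectively the central gap and the left and right children of $J$. Since $\widehat U$ lies to the left of $U$ and $J$ is minimal, exactly one of the following occurs: (i) $\widehat U=V$ and $U\subseteq J_{1}$; (ii) $\widehat U\subseteq J_{0}$ and $U\subseteq J_{1}$; (iii) $\widehat U\subseteq J_{0}$ and $U=V$. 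In cases (i) and (ii), $r(\widehat U)\le r(J_{0})\le l(J_{1})$ while $l(U)-l(J_{1})\ge d_{m}$ (a step-$m$ gap inside $J_{1}$ is at distance $\ge d_{m}$ from $l(J_{1})$), so $|B^{-}(U)|=l(U)-r(\widehat U)\ge d_{m}$ and the ratio is $\ge r_{m}\ge\inf_{k}r_{k}$.

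Case (iii) is where the work is. There $J_{0}$ (at level $m$) contains $\widehat U$, so its largest gap, its own central gap of length $g_{m+1}$, satisfies $g_{m+1}\ge g_{m}$. Using that all gaps of $C(a)$ at a fixed level have equal length (the one at level $\ell$ having length $g_{\ell+1}$) and that the right-most gap of a given level inside $J_{0}$ sits in the iterated right-child $J_{0,1,\dots,1}$, one gets that $\widehat U$ is the central gap of the $j$-fold right-most descendant of $J_{0}$, where $j\ge0$ is the largest index with $g_{m+1+j}\ge g_{m}$ (finite since $g_{m+1+i}\le d_{m+i}\to0$). That descendant shares its right endpoint with $J_{0}$, so $r(\widehat U)=l(U)-d_{m+1+j}$, $|B^{-}(U)|=d_{m+1+j}$, and hence
\[
\frac{|B^{-}(U)|}{|U|}=\frac{d_{m+1+j}}{g_{m}}=\frac{d_{m+1+j}}{g_{m+1+j}}\cdot\frac{g_{m+1+j}}{g_{m}}\ \ge\ \frac{d_{m+1+j}}{g_{m+1+j}}=r_{m+1+j}\ \ge\ \inf_{k}r_{k}.
\]
This, with the upper bound, yields $\tau(C(a))=\inf_{n}r_{n}=\inf_{n}\frac{1-a_{n}}{2a_{n}}$. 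The main obstacle is exactly this case (iii): when the $a_{k}$ fluctuate a gap removed at a late step can be longer than one removed earlier, so a bridge need not be a single level-$m$ piece and its length is not simply $d_{m}$; the identity $d_{k}/g_{j}=(d_{k}/g_{k})(g_{k}/g_{j})\ge d_{k}/g_{k}$, valid whenever $g_{k}\ge g_{j}$, is what lets one re-index the bound to the level of the obstructing gap, and pinning down which nested sub-interval carries $\widehat U$ (and checking no admissible configuration beats $\inf_{k}r_{k}$) is the heart of the estimate.
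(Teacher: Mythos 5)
The paper does not prove this lemma at all --- it is quoted from [HKY] --- so there is no internal proof to compare against; judged on its own terms, your argument is essentially a correct and complete proof, and the part you single out as the heart of the matter is indeed where a naive argument breaks: in case (iii) a gap created at a later step can be longer than the gap $U$ whose bridge you are estimating, the bridge then has length $d_{m+1+j}$ rather than $d_m$, and the re-indexing $\frac{d_{m+1+j}}{g_m}=\frac{d_{m+1+j}}{g_{m+1+j}}\cdot\frac{g_{m+1+j}}{g_m}\ge r_{m+1+j}$ is exactly the right rescue. Your identification of $\widehat U$ as the central gap of the $j$-fold right-most descendant of $J_0$, with $j$ the \emph{largest} index satisfying $g_{m+1+j}\ge g_m$ (the admissible indices need not form an initial segment), is also correct.

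Two caveats. First, the ``bridge form'' of $\tau$ you adopt in one line is not literally the formula displayed in the paper: the displayed $\tau$ is an infimum over pairs of bounded gaps only, whereas your bridges may abut $0$ and $1$. These genuinely differ: for $a=(0.99,\,0.01,\,0.01,\dots)$ the pair formula yields $49.5$ (the huge first gap is never the shorter member of a pair, so no pair ever divides by its length, and every pair involving it has ratio at least $\frac{1-0.01}{2\cdot 0.01}$), while $\inf_n\frac{1-a_n}{2a_n}=\frac{1}{198}$, which is the bridge value coming from the segment $[0,d_1]$ trapped between $0$ and that gap. So the lemma is true for the Newhouse bridge thickness --- which is what [HKY] computes and what Theorem \ref{g1} needs --- and your reading is the right one, but the equivalence you assert fails for the definition as printed; that is a defect of the paper's transcription of the definition rather than of your argument, but it must be stated rather than silently assumed. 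Second, a small internal slip in case (iii): you justify admissibility by claiming the largest gap of $J_0$ is its own central gap of length $g_{m+1}$, which is false precisely in the fluctuating regime you are treating (take $a_{m+1}$ tiny and $a_{m+2}$ near $1$, so that $g_{m+2}>g_{m+1}$). Fortunately the conclusion you draw from it, $g_{m+1}\ge g_m$, is never used: all you need is that \emph{some} $i$ with $g_{m+1+i}\ge g_m$ exists, which follows directly from $\widehat U\subseteq J_0$ and $|\widehat U|\ge g_m$, and the final estimate only invokes $g_{m+1+j}\ge g_m$ for the maximal such $j$. (Cosmetically, your $g_n=a_nd_{n-1}$ collides with the paper's $g_n=\prod_{i\le n}\frac{1-b_i}{2}$ and should be renamed if inserted here.)
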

The following theorem is a version for central Cantor sets of a known result about the algebraic difference of two Cantor sets, which uses the notion of thickness. 
\begin{theorem}
[the Newhouse gap lemma; \cite{N},\cite{As}] \label{g1}
If $a,b\in (0,1)^{\mathbb{N}}$ and $\tau
(C(a))\tau (C(b))\geq 1$, then $C(a)-C(b)=[-1,1]$.
\end{theorem}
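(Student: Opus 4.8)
The plan is to derive this from the general Newhouse gap lemma together with the thickness formula of Lemma \ref{l2}. Actually, since the statement is phrased as a "version for central Cantor sets," the cleanest route is to prove it directly, exploiting the very rigid self-concentric structure of $C(a)$ and $C(b)$ rather than invoking the general topological gap lemma. So first I would reduce the claim $C(a)-C(b)=[-1,1]$ to showing $C(a)-C(b)$ is closed (which is automatic, being the continuous image of the compact set $C(a)\times C(b)$) and dense in $[-1,1]$ — equivalently, that $C(a)-C(b)$ has empty interior complement inside $[-1,1]$, i.e. contains no gap. Note also that $C(a)-C(b)\subseteq[0,1]-[0,1]=[-1,1]$, so only the reverse inclusion is at stake, and by the symmetry $C(a)-C(b) = -(C(b)-C(a))$ it suffices to handle, say, $[0,1]$.

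The key step is a "no gap survives" argument run through the nested construction. Fix $x\in[-1,1]$; I want $x\in C(a)-C(b)$. Define a decreasing sequence of index pairs: having chosen intervals $I_{s}\in\mathcal I_n$ (from the construction of $C(a)$) and $J_{t}\in\mathcal I_n$ (from $C(b)$) with $x\in I_{s}-J_{t}$ — note $I_s-J_t$ is a closed interval of length $d_n(a)+d_n(b)$ — I must pass to children $I_{s0},I_{s1}$ and $J_{t0},J_{t1}$ so that $x$ still lies in (the union forming) one of the four difference intervals $I_{si}-J_{tj}$. The obstruction to continuing is exactly that the removed concentric gaps $P_s$ (length $a_{n+1}d_n(a)$) and $P_t$ (length $b_{n+1}d_n(b)$) might create, in the difference, a genuine gap around $x$. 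A Minkowski-difference computation shows that $(I_s\setminus P_s)-(J_t\setminus P_t)$ fails to cover all of $I_s-J_t$ precisely when the gap in one factor is wider than the total length of the surviving pieces of the other factor lined up against it; the precise inequality is the one governed by $\tau(C(a))\tau(C(b))\ge 1$ via Lemma \ref{l2}, i.e. $\frac{1-a_{n+1}}{2a_{n+1}}\cdot\frac{1-b_{n+1}}{2b_{n+1}}\ge 1$ for all $n$, which rearranges to $d_{n+1}(a)\,d_{n+1}(b)\ge \tfrac14(a_{n+1}d_n(a))(b_{n+1}d_n(b))$-type bounds ensuring the four child difference-intervals overlap enough to leave no hole. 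Thus for every $x$ the selection can be continued, producing nested intervals $I_{s^{(n)}}-J_{t^{(n)}}\ni x$ of length $d_n(a)+d_n(b)\to 0$; their intersection is a single point of $C(a)-C(b)$, necessarily $x$.

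The main obstacle is the bookkeeping in that one inductive step: one has to check all of the finitely many ways the point $x$ can sit relative to the two deleted middle gaps (left of both, between them, straddling one, etc.) and verify in each case that the thickness product condition forces $x$ into one of the four surviving difference-intervals — in particular handling the boundary case $\tau(C(a))\tau(C(b))=1$, where the child intervals are only just touching and one must argue the endpoint is still captured. Everything else — compactness for closedness, the trivial inclusion $C(a)-C(b)\subseteq[-1,1]$, and the diameter-to-zero conclusion — is routine. I would organize the proof by first recording the elementary fact that for closed intervals $I,J$ of equal length $d$ with concentric open sub-intervals of lengths $\alpha d,\beta d$ removed, the algebraic difference of the two "double intervals" equals the full interval $I-J$ iff $\alpha\beta\le \dots$, then feed this into the induction.
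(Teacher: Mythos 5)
The paper does not prove Theorem \ref{g1} at all --- it is quoted from Newhouse's work \cite{N} as a known black box --- so the only thing to assess is whether your direct argument would actually work. It would not: the core inductive step is false. You propose that, given $x\in I_s-J_t$ with $I_s\in\mathcal I_n$ (for $C(a)$) and $J_t\in\mathcal I_n$ (for $C(b)$), the hypothesis $\tau(C(a))\tau(C(b))\ge 1$ forces $x$ to lie in one of the four child difference intervals $I_{si}-J_{tj}$, because (you claim) the covering of $I_s-J_t$ by $(I_s\setminus P_s)-(J_t\setminus P_t)$ is governed by the level-wise inequality $\frac{1-a_{n+1}}{2a_{n+1}}\cdot\frac{1-b_{n+1}}{2b_{n+1}}\ge 1$. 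That inequality rearranges to $d_{n+1}g_{n+1}\ge (a_{n+1}d_n)(b_{n+1}g_n)$ (``product of bridges $\ge$ product of gaps''), but covering additionally requires the two scales $d_n$ and $g_n$ to be comparable: by Lemma \ref{l3}(3) the two middle children $J_{s\symbol{94}1}$, $J_{s\symbol{94}2}$ intersect if and only if $\frac{d_n}{g_n}\ge b_{n+1}$ and $\frac{g_n}{d_n}\ge a_{n+1}$, and these ratios depend on the whole history of the construction, not on the single-level thickness quotients. Concretely, take $a_n=\frac1{37}$ and $b_n=\frac9{10}$ for all $n$. Then $\tau(C(a))=18$, $\tau(C(b))=\frac1{18}$, so the Newhouse hypothesis holds with equality; but $\frac{g_2}{d_2}=\bigl(\frac{37}{360}\bigr)^2\approx 0.0106<\frac1{37}=a_3$, so the concentric gap $P_s$ of any $I_s\in\mathcal I_2$ (length $a_3d_2$) is longer than the whole of $J_t$ (length $g_2$): a translate of $J_t$ fits entirely inside $P_s$, the interval $\bigl(l(J_s)+g_2+d_3,\;l(J_s)+d_2-d_3\bigr)$ is disjoint from all four children, and your selection cannot be continued for $x$ in that hole. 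Points of that hole are covered at level $3$ only by difference intervals coming from \emph{other} pairs $(s',t')$, which your scheme never consults.

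This is not a repairable bookkeeping issue but a structural one: the level-wise covering condition you are implicitly using is exactly conditions $(\ast)$ and $(\ast\ast)$ of Theorem \ref{tw1}, and the paper's Example after that theorem (together with the computation above) shows that these conditions and the thickness hypothesis are logically independent --- neither implies the other. The actual proof of the gap lemma must work globally: one fixes $x$, considers the two Cantor sets $C(a)$ and $x+C(b)$, observes they are linked (neither lies in a gap or unbounded complementary component of the other, since both span intervals overlapping appropriately when $x\in[-1,1]$), and then runs Newhouse's gap-pair argument --- choosing a pair of gaps, one from each set, witnessing non-intersection and deriving a contradiction from $\tau(C(a))\tau(C(b))\ge 1$. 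If you want a proof in the spirit of your nested-interval scheme, you must at each level re-select among \emph{all} pairs $(I,J)\in\mathcal I_{n+1}\times\mathcal I_{n+1}$ with $x\in I-J$, and showing such a pair always exists is essentially the gap-pair argument again. The peripheral reductions are also slightly off ($C(a)-C(b)=-(C(b)-C(a))$ relates two different sets unless $a=b$, so it does not halve the work), but the fatal defect is the inductive step.
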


%
%
%
%
%

Our purpose is to study the algebraic difference $C(a)-C(b)$ of two central Cantor sets. Observe that since $C(b)$ is symmetric with respect to $\frac{1}{2}$ we have $C(a)+C(b) = C(a)+1-C(b)= C(a)-C(b)+1,$ so the algebraic sums and differences of central Cantor sets are topologically the same.  We will use some ideas from \cite{FN}.

Let $t$ and $s$ be some finite sequences. To
denote the concatenation $t$ and $s$, we write $t\symbol{94}%
s$. For $n \in \N$, by $t_n$ we denote the $n$-th term of the sequence $t$ and by $t|n$ we denote the sequence of $n$ first terms of the sequence $t$.

Let $d_{n}=\prod_{i=1}^{n}\frac{1-a_{i}}{2}$ and 
$g_{n}=\prod_{i=1}^{n}\frac{1-b_{i}}{2}$. By $I_{s}^{a}$ ($I_{s}^{b}$, respectively) we will denote the interval $I_{s}$ from the construction of the set $C(a)$ ($%
C(b)$, respectively). Let $\left\{ 0,1\right\} ^{0}=\emptyset$ (the empty sequence), $I_{\emptyset }^{a}=I_{\emptyset }^{b}=\left[
0,1\right] $ and $d_{0}=g_{0}=1$.
Then we have 
\begin{equation*}
C_{n}\left( a\right) -C_{n}\left( b\right) =\bigcup\limits_{p,q\in \left\{
0,1\right\} ^{n}}\left( I_{p}^{a}-I_{q}^{b}\right) .
\end{equation*}%
For $p,q\in \left\{ 0,1\right\} ^{n}$ we define the sequence $s\in \left\{
0,1,2,3\right\} ^{n}$ and the interval $J_{s}$, putting
$s_{i}:=2p_{i}-q_{i}+1$ for $i=1,\ldots n$, and 
$
J_{s}:=I_{p}^{a}-I_{q}^{b}.
$
Then 
\begin{equation*}
C_{n}\left( a\right) -C_{n}\left( b\right) =\bigcup\limits_{s\in \left\{
0,1,2,3\right\} ^{n}}J_{s}
\end{equation*}
and $\left\vert J_{s}\right\vert =d_{n}+g_{n}$ for $s\in \left\{
0,1,2,3\right\} ^{n}$. Observe that
\begin{eqnarray*}
J_{0} &=&I_{0}^{a}-I_{1}^{b}=[-1,-1+d_{1}+g_{1}], \\
J_{1} &=&I_{0}^{a}-I_{0}^{b}=[-g_{1},d_{1}], \\
J_{2} &=&I_{1}^{a}-I_{1}^{b}=[-d_{1},g_{1}], \\
J_{3} &=&I_{1}^{a}-I_{0}^{b}=[1-d_{1}-g_{1},1].
\end{eqnarray*}%
Moreover, if for some $n\in \mathbb{N}$ and $s\in \left\{
0,1,2,3\right\} ^{n}$ we have $J_{s}=I_{p}^{a}-I_{q}^{b}$, then
\begin{eqnarray*}
J_{s\symbol{94}0} &=&I_{p\symbol{94}0}^{a}-I_{q\symbol{94}1}^{b}=[
l( I_{p}^{a}) ,l( I_{p}^{a}) +d_{n+1}] -[
r( I_{q}^{b}) -g_{n+1},r( I_{q}^{b}) ] \\
&=&[l(J_{s}),l(J_{s})+d_{n+1}+g_{n+1}], \\
J_{s\symbol{94}1} &=&I_{p\symbol{94}0}^{a}-I_{q\symbol{94}0}^{b}=[
l( I_{p}^{a}) ,l( I_{p}^{a}) +d_{n+1}] -[
l( I_{q}^{b}) ,l( I_{q}^{b}) +g_{n+1}] \\
&=&[l(J_{s})+g_{n}-g_{n+1},r(J_{s})-d_{n}+d_{n+1}], \\
J_{s\symbol{94}2} &=&I_{p\symbol{94}1}^{a}-I_{q\symbol{94}1}^{b}=[
r( I_{p}^{a}) -d_{n+1},r( I_{p}^{a}) ] -[
r( I_{q}^{b}) -g_{n+1},r( I_{q}^{b}) ] \\
&=&[l(J_{s})+d_{n}-d_{n+1},r(J_{s})-g_{n}+g_{n+1}], \\
J_{s\symbol{94}3} &=&I_{p\symbol{94}1}^{a}-I_{q\symbol{94}0}^{b}=[
r( I_{p}^{a}) -d_{n+1},r( I_{p}^{a}) ] -[
l( I_{q}^{b}) ,l( I_{q}^{b}) +g_{n+1}] \\
&=&[r(J_{s})-d_{n+1}-g_{n+1},r(J_{s})].
\end{eqnarray*}%
Put $J_{\emptyset }=I_{\emptyset }^{a}-I_{\emptyset }^{b}=%
\left[ -1,1\right] $ and observe that the above formulas remain true for 
$n=0$ and $s=\emptyset $.
\begin{lemma}
\label{lem1} For any $a\in (0,1)^{\mathbb{N}}$, $n, k \in \N \cup \{0\},$ where $n>k$, we have $%
d_{n}-d_{n+1}<d_{k}-d_{k+1}$.
\end{lemma}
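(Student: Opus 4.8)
The plan is to reduce the inequality $d_n - d_{n+1} < d_k - d_{k+1}$ to a statement about the single-step ratios of the sequence $(d_n)$. Recall that $d_n = \prod_{i=1}^n \frac{1-a_i}{2}$, so $d_{n+1} = d_n \cdot \frac{1-a_{n+1}}{2}$, and hence
\[
d_n - d_{n+1} = d_n\left(1 - \frac{1-a_{n+1}}{2}\right) = d_n \cdot \frac{1+a_{n+1}}{2}.
\]
Thus it suffices to show $d_n \cdot \frac{1+a_{n+1}}{2} < d_k \cdot \frac{1+a_{k+1}}{2}$ whenever $n > k$. Since each factor $\frac{1-a_i}{2}$ lies in $(0,\tfrac12)$, the sequence $(d_n)$ is strictly decreasing, so $d_n < d_k$ for $n > k$. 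The only remaining worry is that the factor $\frac{1+a_{k+1}}{2}$ could be small relative to $\frac{1+a_{n+1}}{2}$; but both of these factors lie in $(\tfrac12, 1)$, so the ratio $\frac{1+a_{n+1}}{1+a_{k+1}}$ is bounded above by $2$, while the gap between $d_n$ and $d_k$ is by at least one full factor $<\tfrac12$. This is the crux of the argument and I expect it to be the main (though still elementary) obstacle.

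Concretely, I would first reduce to the case $n = k+1$: if the inequality $d_{m+1}-d_{m+2} < d_m - d_{m+1}$ holds for every $m \ge 0$, then chaining it from $m=k$ up to $m=n-1$ gives the general statement by transitivity. So fix $m = k$ and compare $d_{k+1}-d_{k+2}$ with $d_k - d_{k+1}$. Using the identity above,
\[
d_{k+1} - d_{k+2} = d_{k+1}\cdot\frac{1+a_{k+2}}{2} = d_k\cdot\frac{1-a_{k+1}}{2}\cdot\frac{1+a_{k+2}}{2},
\]
whereas $d_k - d_{k+1} = d_k \cdot \frac{1+a_{k+1}}{2}$. Dividing both sides by $d_k > 0$, the claim becomes
\[
\frac{1-a_{k+1}}{2}\cdot\frac{1+a_{k+2}}{2} < \frac{1+a_{k+1}}{2}.
\]
Now $\frac{1+a_{k+2}}{2} < 1$ since $a_{k+2} \in (0,1)$, so the left-hand side is strictly less than $\frac{1-a_{k+1}}{2}$, which in turn is strictly less than $\frac{1+a_{k+1}}{2}$ because $a_{k+1} > 0$. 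This closes the one-step case, and the reduction above finishes the lemma.

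One should double-check the degenerate cases permitted by the statement, namely $k = 0$ (where $d_0 = 1$, $d_1 = \frac{1-a_1}{2}$) and small $n$; these are already covered since the computations above only used $a_i \in (0,1)$ and $d_0 = 1$, with no special role for the index $0$. I would present the write-up in the order: (i) the identity $d_m - d_{m+1} = d_m \cdot \frac{1+a_{m+1}}{2}$; (ii) the one-step inequality via the displayed bound on ratios; (iii) the telescoping/transitivity step to pass from consecutive indices to arbitrary $n > k$.
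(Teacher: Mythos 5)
Your proof is correct and follows essentially the same route as the paper: reduce to the consecutive-index case $d_{m+1}-d_{m+2}<d_m-d_{m+1}$, divide out the common factor $d_m$, and observe that the resulting inequality in $a_{m+1},a_{m+2}$ holds trivially because each $a_i\in(0,1)$. The paper phrases the one-step inequality as $2d_n-d_{n+1}<d_{n-1}$ and divides by $d_{n-1}$, which is the same computation in a slightly different arrangement.
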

\begin{proof}
It suffices to show that $%
d_{n}-d_{n+1}<d_{n-1}-d_{n} $, for $n > 1$, or equivalently $2d_{n}-d_{n+1}<d_{n-1}$. Dividing
both sides of the last inequality by $d_{n-1}$, we get $$%
1-a_{n}-\frac{(1-a_{n})(1-a_{n+1})}{4}<1,$$ which holds for all $n$.
\end{proof}
\begin{lemma}
\label{l3}Assume that $a=(a_{n})\in (0,1)^{\mathbb{N}%
},b=(b_{n})\in (0,1)^{\mathbb{N}}$, $n\in \mathbb{N\cup }\left\{ 0\right\} $
and $s\in \left\{ 0,1,2,3\right\} ^{n}$. The following equivalences hold:

\begin{enumerate}
\item $\frac{g_{n}}{d_{n}}\geq a_{n+1}\Leftrightarrow l\left( J_{s\symbol{94}%
2}\right) \leq r\left( J_{s\symbol{94}1}\right) ;$

\item $\frac{d_{n}}{g_{n}}\geq b_{n+1}\Leftrightarrow l\left( J_{s\symbol{94}%
1}\right) \leq r\left( J_{s\symbol{94}2}\right) ;$

\item $\left( \frac{d_{n}}{g_{n}}\geq b_{n+1} \;\text{ and }\; \frac{g_{n}}{d_{n}}\geq
a_{n+1}\right) \Leftrightarrow J_{s\symbol{94}1}\cap J_{s\symbol{94}2}\neq
\emptyset ;$

\item $\frac{d_{n+1}}{g_{n}}\geq b_{n+1}\Leftrightarrow J_{s%
\symbol{94}0}\cap J_{s\symbol{94}1}\neq \emptyset \Leftrightarrow J_{s%
\symbol{94}2}\cap J_{s\symbol{94}3}\neq \emptyset ;$

\item $\frac{g_{n+1}}{d_{n}}\geq a_{n+1}\Leftrightarrow J_{s%
\symbol{94}0}\cap J_{s\symbol{94}2}\neq \emptyset \Leftrightarrow J_{s%
\symbol{94}1}\cap J_{s\symbol{94}3}\neq \emptyset .$
\end{enumerate}

\begin{proof}
Ad (1) We have
\begin{gather*}
r\left( J_{s\symbol{94}1}\right) - l\left( J_{s\symbol{94}2}\right) = r(J_{s})-d_{n}+d_{n+1} - l(J_{s})-d_{n}+d_{n+1}= d_n +g_n -2d_n +2d_{n+1} = g_n-d_n + 2d_{n+1}.
\end{gather*}
Hence $l\left( J_{s\symbol{94}2}\right) \leq r\left( J_{s\symbol{94}1}\right)$ if and only if $g_n-d_n + 2d_{n+1} \geq 0$, which is equivalent to $\frac{g_{n}}{d_{n}}\geq a_{n+1}$.

Ad (2) The proof is analogous to that of (1).

Ad (3) The assertion follows from (1), (2) and the equivalence
\begin{equation*}
\left( l\left( J_{s\symbol{94}2}\right) \leq r\left( J_{s\symbol{94}%
1}\right) \;\text{ and }\; l\left( J_{s\symbol{94}1}\right) \leq r\left( J_{s\symbol{94%
}2}\right) \right) \Leftrightarrow J_{s\symbol{94}1}\cap J_{s\symbol{94}%
2}\neq \emptyset .
\end{equation*}

Ad (4) Of course, $J_{s%
\symbol{94}0}\cap J_{s\symbol{94}1}\neq \emptyset$ if and only if $r(J_{s%
\symbol{94}0}) \geq l(J_{s%
\symbol{94}1})$.
We have 
\begin{gather*}
r(J_{s\symbol{94}0}) -l(J_{s\symbol{94}1}) =  l(J_{s})+d_{n+1}+g_{n+1}-
l(J_{s})-g_{n}+g_{n+1} = d_{n+1}+2g_{n+1}-g_n.
\end{gather*}
Thus, $J_{s%
\symbol{94}0}\cap J_{s\symbol{94}1}\neq \emptyset$ if and only if $d_{n+1}+2g_{n+1}-g_n \geq 0$, which is equivalent to $\frac{d_{n+1}}{g_{n}}\geq b_{n+1}$.

The equivalence $J_{s\symbol{94}0}\cap J_{s\symbol{94}1}\neq
\emptyset \Leftrightarrow J_{s\symbol{94}2}\cap J_{s\symbol{94}3}\neq
\emptyset $ follows from the equality
\begin{equation*}
r\left( J_{s\symbol{94}0}\right) -l\left( J_{s\symbol{94}1}\right)
=d_{n+1}+2g_{n+1}-g_{n}=r\left( J_{s\symbol{94}2}\right) -l\left( J_{s%
\symbol{94}3}\right) .
\end{equation*}

Ad (5) The proof is similar to that of (4).
\end{proof}
\end{lemma}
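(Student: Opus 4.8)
The plan is to reduce each of the five equivalences to a single elementary inequality between endpoints of the child intervals, using only the explicit formulas for $J_{s\symbol{94}0},\dots ,J_{s\symbol{94}3}$ displayed above together with the two identities
\begin{equation*}
a_{n+1}=\frac{d_{n}-2d_{n+1}}{d_{n}},\qquad b_{n+1}=\frac{g_{n}-2g_{n+1}}{g_{n}}
\end{equation*}
coming from $d_{n+1}=\frac{1-a_{n+1}}{2}d_{n}$ and $g_{n+1}=\frac{1-b_{n+1}}{2}g_{n}$. Since $r(J_{s})-l(J_{s})=d_{n}+g_{n}$, substituting the formulas for the left and right endpoints of the $J_{s\symbol{94}i}$ into any endpoint difference makes the term $l(J_{s})$ cancel and leaves a linear combination of $d_{n},g_{n},d_{n+1},g_{n+1}$ only; each claimed condition is then the nonnegativity of such a combination, which the two identities turn into the stated inequality on $a_{n+1}$ or $b_{n+1}$.

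Concretely, for (1) I would compute $r(J_{s\symbol{94}1})-l(J_{s\symbol{94}2})=g_{n}-d_{n}+2d_{n+1}$, so $l(J_{s\symbol{94}2})\le r(J_{s\symbol{94}1})$ iff $g_{n}\ge d_{n}-2d_{n+1}=a_{n+1}d_{n}$, i.e.\ iff $g_{n}/d_{n}\ge a_{n+1}$; part (2) is the mirror computation with the roles of $(a,d,J_{s\symbol{94}1})$ and $(b,g,J_{s\symbol{94}2})$ interchanged. For (3) I would only observe that $J_{s\symbol{94}1}$ and $J_{s\symbol{94}2}$ are closed intervals, hence they meet iff $l(J_{s\symbol{94}1})\le r(J_{s\symbol{94}2})$ and $l(J_{s\symbol{94}2})\le r(J_{s\symbol{94}1})$, which by (1) and (2) is exactly the asserted conjunction.

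For (4) and (5) the relevant pair involves $J_{s\symbol{94}0}$, the leftmost child. Again "the intervals meet" amounts to two endpoint inequalities, but one of them is vacuous: $l(J_{s\symbol{94}0})=l(J_{s})\le r(J_{s\symbol{94}1})$ reduces to $0\le g_{n}+d_{n+1}$, and $l(J_{s})\le r(J_{s\symbol{94}2})$ to $0\le d_{n}+g_{n+1}$, both always true. So only the other inequality carries information: $r(J_{s\symbol{94}0})\ge l(J_{s\symbol{94}1})$ gives $d_{n+1}+2g_{n+1}\ge g_{n}$, i.e.\ $d_{n+1}/g_{n}\ge b_{n+1}$, and $r(J_{s\symbol{94}0})\ge l(J_{s\symbol{94}2})$ gives $2d_{n+1}+g_{n+1}\ge d_{n}$, i.e.\ $g_{n+1}/d_{n}\ge a_{n+1}$. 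The double equivalences then follow from the algebraic coincidences $r(J_{s\symbol{94}0})-l(J_{s\symbol{94}1})=r(J_{s\symbol{94}2})-l(J_{s\symbol{94}3})=d_{n+1}+2g_{n+1}-g_{n}$ in (4) and $r(J_{s\symbol{94}0})-l(J_{s\symbol{94}2})=r(J_{s\symbol{94}1})-l(J_{s\symbol{94}3})=2d_{n+1}+g_{n+1}-d_{n}$ in (5), once one checks that the complementary endpoint inequality for the $J_{s\symbol{94}3}$-pair is again automatic. I do not expect a genuine obstacle here beyond bookkeeping; the one point needing a little care is exactly this reduction in (4) and (5): because $J_{s\symbol{94}0}$ lies at the extreme left of $J_{s}$ while $J_{s\symbol{94}1}$ and $J_{s\symbol{94}2}$ are interior, one of the two intersection inequalities holds automatically, so a single scalar inequality governs the intersection, and the left--right symmetry of the construction then delivers the mirror statements about $J_{s\symbol{94}2}\cap J_{s\symbol{94}3}$ and $J_{s\symbol{94}1}\cap J_{s\symbol{94}3}$ at no extra cost.
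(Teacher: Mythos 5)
Your proposal is correct and follows essentially the same route as the paper: direct computation of the relevant endpoint differences as linear combinations of $d_{n},g_{n},d_{n+1},g_{n+1}$, translation via $d_{n+1}=\frac{1-a_{n+1}}{2}d_{n}$ and $g_{n+1}=\frac{1-b_{n+1}}{2}g_{n}$, and the observed coincidences of differences for the paired intersections in (4) and (5). The only difference is cosmetic — you spell out why one of the two intersection inequalities is automatic for the pairs involving $J_{s\symbol{94}0}$, which the paper dispatches with an ``of course.''
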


\begin{lemma}
\label{l4}Assume that $a=(a_{n})\in (0,1)^{\mathbb{N}%
},b=(b_{n})\in (0,1)^{\mathbb{N}}$ and $n\in \mathbb{N}\cup \left\{
0\right\} $. If
\begin{equation*}
\frac{d_{n}}{g_{n}}\geq b_{n+1} \;\text{ and }\; \frac{g_{n}}{d_{n}}\geq a_{n+1}, \;\text{ and }\;
\left( \frac{d_{n+1}}{g_{n}}\geq b_{n+1} \;\text{ or }\; \frac{g_{n}}{%
d_{n}}\geq a_{n+1}\right) ,
\end{equation*}%
then $C_{n+1}\left( a\right) -C_{n+1}\left( b\right) =C_{n}\left( a\right)
-C_{n}\left( b\right) $.

\begin{proof}
Let $s \in \left\{ 0,1,2,3\right\} ^{n}$.
From Lemma \ref{l3} we infer that 
\begin{equation*}
J_{s\symbol{94}1}\cap J_{s\symbol{94}2}\neq \emptyset \;\text{ and }\; J_{s\symbol{94}%
0}\cap J_{s\symbol{94}1}\neq \emptyset \;\text{ and }\; J_{s\symbol{94}2}\cap J_{s%
\symbol{94}3}\neq \emptyset 
\end{equation*}%
or 
\begin{equation*}
J_{s\symbol{94}1}\cap J_{s\symbol{94}2}\neq \emptyset \;\text{ and }\; J_{s\symbol{94}%
0}\cap J_{s\symbol{94}2}\neq \emptyset \;\text{ and }\; J_{s\symbol{94}1}\cap J_{s%
\symbol{94}3}\neq \emptyset .
\end{equation*}%
In both cases we have $J_{s\symbol{94}0}\cup J_{s\symbol{94}1}\cup
J_{s\symbol{94}2}\cup J_{s\symbol{94}3}=J_{s}$. Hence
\begin{eqnarray*}
C_{n}\left( a\right) -C_{n}\left( b\right)  &=&\bigcup\limits_{s\in \left\{
0,1,2,3\right\} ^{n}}J_{s}=\bigcup\limits_{s\in \left\{ 0,1,2,3\right\}
^{n}}\left( J_{s\symbol{94}0}\cup J_{s\symbol{94}1}\cup J_{s\symbol{94}%
2}\cup J_{s\symbol{94}3}\right)  \\
&=&\bigcup\limits_{t\in \left\{ 0,1,2,3\right\} ^{n+1}}J_{t}=C_{n+1}\left(
a\right) -C_{n+1}\left( b\right) .
\end{eqnarray*}
\end{proof}
\end{lemma}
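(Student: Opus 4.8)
The plan is to reduce the set equality to a local statement about each of the $4^{n}$ blocks $J_{s}$ and then to quote Lemma \ref{l3}. First I would fix $s\in\{0,1,2,3\}^{n}$ and reduce the lemma to the single identity $J_{s\symbol{94}0}\cup J_{s\symbol{94}1}\cup J_{s\symbol{94}2}\cup J_{s\symbol{94}3}=J_{s}$: indeed, since $C_{n}(a)-C_{n}(b)=\bigcup_{s\in\{0,1,2,3\}^{n}}J_{s}$ and $C_{n+1}(a)-C_{n+1}(b)=\bigcup_{t\in\{0,1,2,3\}^{n+1}}J_{t}$, taking the union of these identities over all $s$ gives exactly $C_{n+1}(a)-C_{n+1}(b)=C_{n}(a)-C_{n}(b)$. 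Note that the inclusion $\subseteq$ in the local identity is automatic, because the recursive formulas for $J_{s\symbol{94}i}$ show $J_{s\symbol{94}i}\subseteq J_{s}$ for each $i$; so the whole content is the reverse inclusion.

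Next I would read off from those recursive formulas the structure needed: each $J_{s\symbol{94}i}$ is an interval of length $d_{n+1}+g_{n+1}$ contained in $J_{s}$, the interval $J_{s\symbol{94}0}$ has the same left endpoint as $J_{s}$, and $J_{s\symbol{94}3}$ has the same right endpoint as $J_{s}$. It therefore suffices to arrange the four subintervals into a chain of consecutively overlapping intervals that starts with $J_{s\symbol{94}0}$ and ends with $J_{s\symbol{94}3}$: the union of such a chain is a connected (hence interval) subset of the closed interval $J_{s}$ containing both endpoints of $J_{s}$, and so equals $J_{s}$.

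To produce the chain I would feed the hypotheses into Lemma \ref{l3}. From $\frac{d_{n}}{g_{n}}\geq b_{n+1}$ and $\frac{g_{n}}{d_{n}}\geq a_{n+1}$, part (3) gives $J_{s\symbol{94}1}\cap J_{s\symbol{94}2}\neq\emptyset$. Now split on the disjunctive hypothesis: if $\frac{d_{n+1}}{g_{n}}\geq b_{n+1}$, then part (4) gives $J_{s\symbol{94}0}\cap J_{s\symbol{94}1}\neq\emptyset$ and $J_{s\symbol{94}2}\cap J_{s\symbol{94}3}\neq\emptyset$, so the chain $J_{s\symbol{94}0},J_{s\symbol{94}1},J_{s\symbol{94}2},J_{s\symbol{94}3}$ works; if instead the other alternative $\frac{g_{n+1}}{d_{n}}\geq a_{n+1}$ holds, then part (5) gives $J_{s\symbol{94}0}\cap J_{s\symbol{94}2}\neq\emptyset$ and $J_{s\symbol{94}1}\cap J_{s\symbol{94}3}\neq\emptyset$, so the chain $J_{s\symbol{94}0},J_{s\symbol{94}2},J_{s\symbol{94}1},J_{s\symbol{94}3}$ works. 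In either case the overlap $J_{s\symbol{94}1}\cap J_{s\symbol{94}2}\neq\emptyset$ is precisely what joins the two halves of the chain, so $J_{s\symbol{94}0}\cup J_{s\symbol{94}1}\cup J_{s\symbol{94}2}\cup J_{s\symbol{94}3}=J_{s}$; the case $n=0$, $s=\emptyset$ is covered since the recursive formulas hold there as well, and summing over $s$ completes the proof.

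I expect the one genuinely delicate point to be the implication used in the second paragraph — that a chain of consecutively overlapping subintervals of $J_{s}$ anchored at the two endpoints of $J_{s}$ must be all of $J_{s}$. Making it rigorous requires using simultaneously that each link of the chain lies inside $J_{s}$ and that consecutive links meet, so that no gap can open strictly between $l(J_{s})$ and $r(J_{s})$. Everything else is bookkeeping; in particular the somewhat opaque disjunction in the hypothesis becomes transparent once one observes that its two alternatives correspond to the two possible left-to-right orders of the middle intervals $J_{s\symbol{94}1}$ and $J_{s\symbol{94}2}$, and Lemma \ref{l3} is tailored to certify exactly the overlaps needed for whichever order occurs.
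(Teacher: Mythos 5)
Your proposal is correct and takes essentially the same route as the paper: reduce to the per-block identity $J_{s\symbol{94}0}\cup J_{s\symbol{94}1}\cup J_{s\symbol{94}2}\cup J_{s\symbol{94}3}=J_{s}$, use Lemma \ref{l3} (3) together with (4) or (5) to certify one of the two overlap patterns, and conclude by the connected-chain argument (which the paper leaves implicit but you spell out correctly via the shared endpoints of $J_{s\symbol{94}0}$, $J_{s\symbol{94}3}$ with $J_{s}$). The only point worth noting is that you silently read the second disjunct of the hypothesis as $\frac{g_{n+1}}{d_{n}}\geq a_{n+1}$ rather than the literally printed $\frac{g_{n}}{d_{n}}\geq a_{n+1}$ (which would make the disjunction vacuous); this is clearly the intended reading, since it is what Lemma \ref{l3} (5) and condition $(\ast)$ of Theorem \ref{tw1} require, and it is what the paper's own proof uses.
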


We need one more useful lemma. The proof can be found in \cite{FN} (Proposition 1.1. (9)).
\begin{lemma}
For any nonincreasing sequences $(A_n)$ and $(B_n)$ of compact subsets of $\R$ we have
$$\bigcap_{n\in\N} A_n - \bigcap_{n\in\N} B_n = \bigcap_{n\in\N}(A_n-B_n).$$
In particular, for any $a,b \in (0,1)^{\N}$ we have 
$$C(a) - C(b) = \bigcap_{n\in\N}(C_n(a)-C_n(b)).$$

\end{lemma}
Now, we can prove the main theorem of this section. 

\begin{theorem}
\label{tw1} Let $a=(a_{n})\in (0,1)^{\mathbb{N}},b=(b_{n})\in (0,1)^{%
\mathbb{N}}$.

\begin{itemize}
\item[(1)] If for any $n\in \mathbb{N\cup }\left\{ 0\right\} $
\begin{equation*}
(\ast )\,\,\,\frac{g_{n+1}}{d_{n}}\geq a_{n+1} \;\text{ or }\; \frac{%
d_{n+1}}{g_{n}}\geq b_{n+1}
\end{equation*}%
and
\begin{equation*}
(\ast \ast )\,\,\,\frac{d_{n}}{g_{n}}\geq b_{n+1} \;\text{ and }\; \frac{g_{n}}{d_{n}}%
\geq a_{n+1},
\end{equation*}%
then $C(a)-C(b)=[-1,1]$.

\item[(2)] If conditions $(\ast )$ and $(\ast \ast )$ hold for sufficiently large  $n$, then $C(a)-C(b)$ is a finite union of closed intervals.

\item[(3)] If $C(a)-C(b)=[-1,1]$, then condition $(\ast )$ holds for all $n\in \mathbb{N}\cup \left\{ 0\right\} $.

\item[(4)] If $C(a)-C(b)$ is a finite union of closed intervals, then condition $(\ast )$ holds for sufficiently large $n$.

\end{itemize}
\end{theorem}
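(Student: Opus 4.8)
The plan is to prove the four parts essentially in the order (1), (2), (3), (4), leveraging Lemma~\ref{l4} for the first two parts and the geometry of the intervals $J_s$ for the last two. For part~(1): I would observe that the hypotheses $(\ast)$ and $(\ast\ast)$ at stage $n$ are exactly the hypothesis of Lemma~\ref{l4} (rewriting ``$\frac{d_{n+1}}{g_n}\geq b_{n+1}$ or $\frac{g_{n+1}}{d_n}\geq a_{n+1}$'' to match; note that $\frac{g_{n+1}}{d_n}\geq a_{n+1}$ is the statement in Lemma~\ref{l3}(5), and one needs to check it plays the role of the disjunct ``$\frac{g_n}{d_n}\geq a_{n+1}$'' in Lemma~\ref{l4}, or re-derive the covering directly). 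So by induction on $n$ we get $C_{n+1}(a)-C_{n+1}(b)=C_n(a)-C_n(b)$ for every $n$, hence $C_n(a)-C_n(b)=C_0(a)-C_0(b)=J_\emptyset=[-1,1]$ for all $n$. Since $C(a)-C(b)$ is compact and equals $\bigcap_n \big(C_n(a)-C_n(b)\big)$ (using that $C(a)-C(b)$ is the intersection of the decreasing compact sets $C_n(a)-C_n(b)$ — this uses compactness of $C(a)$ and $C(b)$), we conclude $C(a)-C(b)=[-1,1]$.

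For part~(2): suppose $(\ast)$ and $(\ast\ast)$ hold for all $n\geq N$. Applying Lemma~\ref{l4} for each $n\geq N$ and inducting as above, $C_{n+1}(a)-C_{n+1}(b)=C_N(a)-C_N(b)$ for all $n\geq N$. But $C_N(a)-C_N(b)=\bigcup_{s\in\{0,1,2,3\}^N} J_s$ is a finite union of closed intervals, and again $C(a)-C(b)=\bigcap_n\big(C_n(a)-C_n(b)\big)$, which from some point on is the constant set $C_N(a)-C_N(b)$; hence $C(a)-C(b)=C_N(a)-C_N(b)$ is a finite union of closed intervals.

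For part~(3): I argue by contraposition. Suppose $(\ast)$ fails at some $n$, i.e. $\frac{g_{n+1}}{d_n}<a_{n+1}$ and $\frac{d_{n+1}}{g_n}<b_{n+1}$. By Lemma~\ref{l3}(4) and (5) this means $J_{s^\wedge 0}\cap J_{s^\wedge 1}=\emptyset$, $J_{s^\wedge 2}\cap J_{s^\wedge 3}=\emptyset$, $J_{s^\wedge 0}\cap J_{s^\wedge 2}=\emptyset$, and $J_{s^\wedge 1}\cap J_{s^\wedge 3}=\emptyset$ for every $s$. The children $J_{s^\wedge 0},\dots,J_{s^\wedge 3}$ sit inside $J_s$ with $J_{s^\wedge 0}$ leftmost (sharing left endpoint with $J_s$) and $J_{s^\wedge 3}$ rightmost (sharing right endpoint); the remaining two, $J_{s^\wedge 1}$ and $J_{s^\wedge 2}$, lie strictly inside. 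Since $J_{s^\wedge 0}$ meets neither $J_{s^\wedge 1}$ nor $J_{s^\wedge 2}$, and both of the latter are to its right, there is a genuine gap to the right of $J_{s^\wedge 0}$ inside $J_s$; more carefully, one checks the union of the four children cannot cover $J_s$ — the point just to the right of $r(J_{s^\wedge 0})$ is not covered by $J_{s^\wedge 1}$ or $J_{s^\wedge 2}$ (disjoint from $J_{s^\wedge 0}$) nor by $J_{s^\wedge 3}$ if that too fails to reach (and if $J_{s^\wedge 3}$ does reach that far, then since $J_{s^\wedge 3}$ is disjoint from $J_{s^\wedge 1}$ the same argument produces an uncovered point near $l(J_{s^\wedge 3})$). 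Hence $C_{n+1}(a)-C_{n+1}(b)\subsetneq C_n(a)-C_n(b)$ with a genuine gap opened strictly inside $[-1,1]$ (taking $s=\emptyset$ suffices: $C_1(a)-C_1(b)$ already omits an interior point of $[-1,1]$), so $C(a)-C(b)\subseteq C_{n+1}(a)-C_{n+1}(b)\neq[-1,1]$.

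For part~(4): again by contraposition, suppose $(\ast)$ fails for arbitrarily large $n$; I must show $C(a)-C(b)$ is not a finite union of closed intervals. The idea is that each failure of $(\ast)$ at stage $n$ forces, as in part~(3), a gap to open strictly inside every interval $J_s$ with $|s|=n$ when passing to stage $n+1$. Concretely, whenever $(\ast)$ fails at stage $n$, for each $s\in\{0,1,2,3\}^n$ the four children of $J_s$ fail to cover $J_s$, creating a gap of $C_{n+1}(a)-C_{n+1}(b)$ inside $J_s$; this gap survives into $C(a)-C(b)$ (it is a gap of the limit set, not merely of a finite approximation, because subsequent stages only remove more) — here one should be slightly careful and note that the newly created gap has length bounded below in terms of $d_n,g_n$, but since lengths $d_n+g_n\to 0$ and there are such gaps inside every one of the $4^n$ intervals $J_s$, the set $C(a)-C(b)$ has gaps of arbitrarily small length, located densely among intervals of arbitrarily small length, hence infinitely many gaps; a finite union of closed intervals has only finitely many gaps, contradiction. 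The main obstacle, and the step I expect to require the most care, is precisely this last argument: cleanly showing that the gaps created at the infinitely many ``bad'' stages $n$ persist as genuine gaps of $C(a)-C(b)$ (rather than being filled at later stages) and that they are genuinely distinct, so that one really obtains infinitely many gaps; the monotone structure $C_{n+1}(a)-C_{n+1}(b)\subseteq C_n(a)-C_n(b)$ together with the fact that a gap of $C_{n+1}(a)-C_{n+1}(b)$ strictly interior to some $J_s$ remains disjoint from all later $J_t$'s is what makes this work, but the bookkeeping on which $J_s$ contains which gap needs to be done carefully to guarantee infinitude.
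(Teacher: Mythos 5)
Your treatment of parts (1) and (2) is correct and coincides with the paper's: the hypotheses are exactly those of Lemma \ref{l4} (modulo the evident typo in its third disjunct, which you rightly flag and resolve via Lemma \ref{l3}(5)), and the conclusion follows from $C(a)-C(b)=\bigcap_n \left(C_n(a)-C_n(b)\right)$.

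Parts (3) and (4), however, contain a genuine gap. From the fact that the four children $J_{s\symbol{94}0},\dots,J_{s\symbol{94}3}$ fail to cover $J_s$ you conclude that $C_{n+1}(a)-C_{n+1}(b)\subsetneq C_n(a)-C_n(b)$, i.e.\ that a genuine hole has opened. This does not follow: the intervals $J_u$ with $u\in\{0,1,2,3\}^{n}$ overlap one another (heavily, in the interesting cases), so a point of $J_s$ missed by the children of $J_s$ may perfectly well lie in $J_t$ for some $t$ that does not extend $s$. Ruling this out is the actual content of the paper's proof of (3): it takes $s=0^{(n-1)}$, so that the candidate uncovered point $x$ lies in $(r(J_{0^{(n)}}),L)$ with $L=\min\{l(J_{s\symbol{94}1}),l(J_{s\symbol{94}2})\}$, and then observes that any other $J_u$ containing $x$ would force $x\ge l(J_{0^{(k-1)}\symbol{94}1})$ or $x\ge l(J_{0^{(k-1)}\symbol{94}2})$ for some $k<n$; Lemma \ref{lem1} (strict monotonicity of $d_k-d_{k+1}$ and $g_k-g_{k+1}$) then yields $l(J_{0^{(k-1)}\symbol{94}1})=-1+g_{k-1}-g_k>-1+g_{n-1}-g_n=l(J_{s\symbol{94}1})\ge L>x$, a contradiction. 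Your proposal never invokes Lemma \ref{lem1} or any substitute for it, and your parenthetical claim that ``taking $s=\emptyset$ suffices: $C_1(a)-C_1(b)$ already omits an interior point of $[-1,1]$'' is false when $(\ast)$ first fails at some $n>0$. Part (4) inherits the same defect, and in addition your plan of exhibiting infinitely many pairwise distinct persistent gaps is heavier than needed: the paper instead uses that a finite union of closed intervals containing the accumulation point $-1$ must contain some $[-1,-1+w]$ with $w>0$, picks a single bad $n$ with $d_n+g_n<w$, and reruns the argument of (3) capped at $-1+w$, so no counting or persistence bookkeeping is required.
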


\begin{proof}
Ad (1)--(2) Assume that there is $n_0 \in \N$ such that conditions $(\ast )$ and $(\ast \ast )$ hold for all $n\geq n_{0}$. From Lemma \ref{l4} it follows that $$%
C_{n}\left( a\right) -C_{n}\left( b\right) =C_{n_{0}}\left( a\right)
-C_{n_{0}}\left( b\right) $$ for $n\geq n_{0}$, and thus
\begin{equation*}
C\left( a\right) -C\left( b\right) =\bigcap\limits_{n\in \mathbb{N}}\left(
C_{n}\left( a\right) -C_{n}\left( b\right) \right) =C_{n_{0}}\left( a\right)
-C_{n_{0}}\left( b\right) =\bigcup\limits_{s\in \left\{ 0,1,2,3\right\}
^{n_{0}}}J_{s},
\end{equation*}%
so $C\left( a\right) -C\left( b\right) $ is a finite union of closed intervals. If $n_{0}=0$, that is, conditions
$(\ast )$ and $(\ast \ast )$ hold for all $n$, then $C\left( a\right) -C\left( b\right)
=C_{0}\left( a\right) -C_{0}\left( b\right) =\left[ -1,1\right] .$

Ad (3) Assume on the contrary that condition $(\ast )$ does not hold for some $n\in \mathbb{N}\cup \{0\}$. Let $s:=0^{(n)}$. From Lemma \ref{l3} it follows that $J_{s\symbol{94}0}\cap J_{s\symbol{94}1}=\emptyset \ $ and $J_{s\symbol{94}%
0}\cap J_{s\symbol{94}2}=\emptyset .$ Put $L:=\min \{l\left( J_{s%
\symbol{94}1}\right) ,l\left( J_{s\symbol{94}2}\right) \}$. Of course, $%
L>r\left( J_{s\symbol{94}0}\right) =r\left( J_{0^{\left( n+1\right) }}\right)
$. Let $x\in \left( r\left( J_{0^{\left( n+1\right) }}\right) ,L\right) $.
Since $C(a)-C(b)=[-1,1]$, there exists a sequence $u\in
\{0,1,2,3\}^{n+1}$ such that $x\in J_{u}$. Then $u\notin \left\{ s\symbol{%
94}i:i=0,1,2,3\right\} $, so $u_{k}>0$ for some $k\leq n$. In consequence, $x\geq l\left( J_{0^{\left( k-1\right) }\symbol{94}1}\right) $
or $x\geq l\left( J_{0^{\left( k-1\right) }\symbol{94}2}\right) $. Using Lemma \ref{lem1}, in the first case we get
\begin{equation*}
L>x\geq l\left( J_{0^{\left( k-1\right) }\symbol{94}1}\right)
=-1+g_{k-1}-g_{k}>-1+g_{n}-g_{n+1}=l\left( J_{s\symbol{94}1}\right) \geq L,
\end{equation*}%
a contradiction. In the second case, a contradiction is obtained similarly.

Ad (4) Assume on the contrary that $(*)$ does not hold for infinitely many $n \in \N$, and $C(a)-C(b)$ is a finite union of closed intervals. Then there exists $ w > 0$ such that $[-1, -1 +w] \subset C(a) - C(b)$. Let $n \in \N$ be such that condition $(*)$ does not hold and $w > d_n + g_n = r\left( J_{0^{\left( n\right) }}\right) + 1$. Let $s:=0^{(n)}$. Then $J_s \subset [-1,-1+w] \subset C(a)-C(b).$ The rest of the proof is the same as in (3).

\end{proof}

The next example shows that the above theorem gives examples of Cantor sets whose algebraic difference is the interval $[-1,1]$, despite not satisfying assumptions of the Newhouse gap lemma.
\begin{example}
Let $a=(\frac{1}{2},\frac{1}{4},\frac{1}{2},\frac{1}{4}, \dots)$, $b = (\frac{1}{4},\frac{1}{2},\frac{1}{4},\frac{1}{2},\dots)$. Then for $n \in \N \cup \{0\}$ we have
$$\frac{d_{2n}}{g_{2n}} = \frac{(\frac{1}{4}\cdot\frac{3}{8})^n}{(\frac{3}{8}\cdot\frac{1}{4})^n} =1,$$ 
$$\frac{d_{2n+1}}{g_{2n+1}} = \frac{(\frac{1}{4}\cdot\frac{3}{8})^n\cdot\frac{1}{4}}{(\frac{3}{8}\cdot\frac{1}{4})^n\cdot\frac{3}{8}}=\frac{2}{3}.$$ 
Hence for all $n \in \N$ 
$$\frac{g_{2n}}{d_{2n-1}}= \frac{g_{2n-1}}{d_{2n-1}}\cdot\frac{1-b_{2n}}{2} = \frac{3}{2}\cdot\frac{1}{4} = \frac{3}{8} > a_{2n} ,$$
$$\frac{d_{2n-1}}{g_{2n-2}}= \frac{d_{2n-2}}{g_{2n-2}}\cdot\frac{1-a_{2n-1}}{2} = 1\cdot\frac{1}{4} = b_{2n-1} ,$$
$$b_n < \frac{2}{3} \leq \frac{d_{n-1}}{g_{n-1}},$$
$$a_n < 1 \leq \frac{g_{n-1}}{d_{n-1}}.$$
Therefore, conditions $(*)$ and $(**)$ hold for every $n \in \N \cup \{0\}$, so $C(a)-C(b)= [-1,1]$.
Moreover, observe that for any $n \in \N$, $a_{n+1} < \frac{2a_n}{1-a_n}$ and $b_{n+1} < \frac{2b_n}{1-b_n}$. So, by Lemma \ref{l2}, $\tau(C(a))=\tau(C(b)) = \min \{\frac{1}{2},\frac{3}{2}\} = \frac{1}{2}$, and thus $\tau(C(a))\cdot \tau(C(b)) = \frac{1}{4} < 1$, so the sufficient condition from the Newhouse gap lemma does not hold.
\end{example}
Actually, using similar reasoning as in the example above, we can prove the more general result. Prof. Franciszek Prus-Wiśniowski asked the question if for any $\ve > 0$ there exist sequences $a, b \in (0,1)^\N$ such that $\tau(C(a))\cdot \tau(C(b)) \leq \ve$ and $C(a)-C(b)=[-1,1]$. The following proposition provides a positive answer for that question.
\begin{proposition}
Let $\ve >0.$ Then there exist sequences $a, b \in (0,1)^\N$ such that $\tau(C(a))\cdot \tau(C(b)) \leq \ve$ and $C(a)-C(b)=[-1,1]$.
\end{proposition}
\begin{proof}
We will define sequences $a=(a_1,a_2,\dots)$, $b=(b_1,b_2,\dots) \in (0,1)^\N$. Put $a_1:= \frac{1}{3}$, $b_1:=\frac{1}{3}$. Of course, $a_1,b_1 \in (0,1)$. We have $d_0 = g_0 = 1$, so $(**)$ holds for $n=0$. Moreover, 
$$\frac{d_1}{g_0}=d_1 = \frac{1-a_1}{2} = \frac{1}{3} =b_1,$$
thus $(*)$ also holds for $n=0$. 

Put $a_2:= \frac{1}{2\ve+1}$, $b_2:= \frac{\ve}{2\ve+1}$. 
Then, 
$$\frac{g_1}{d_1}=1> a_2$$
and
$$\frac{d_1}{g_1} > \frac{d_2}{g_1} = \frac{1-a_2}{2} = \frac{1-\frac{1}{2\ve+1}}{2} = \frac{\ve}{2\ve+1}=b_2,$$
therefore $(*)$ and $(**)$ hold for $n=1$.
Now, suppose that for some $n \geq 2$ we have defined $a_i,b_i \in (0,1)$, for $i \leq n$ in such a way that conditions $(*)$ and $(**)$ hold for $i \leq n-1$. 
Choose $b_{n+1}\in (0,1)$ such that $b_{n+1} \leq \frac{d_n}{g_n}$ and $b_{n+1} \leq \frac{2b_n}{1-b_n}$. Then, choose $a_{n+1} \in (0,1)$ such that $a_{n+1} \leq \frac{g_{n+1}}{d_n} $ and $a_{n+1} \leq \frac{2a_n}{1-a_n}$. 
Since $\frac{g_{n+1}}{d_n} \leq \frac{g_{n}}{d_n}$, conditions $(*)$ and $(**)$ are satisfied for $n$. 

This way we have inductively constructed sequences $a,b \in (0,1)^\N$ for which conditions $(*)$ and $(**)$ are satisfied for any $n \in \N \cup \{0\}$. Hence $C(a)-C(b) = [-1,1]$. 
In the same time
we have $a_{n+1} \leq \frac{2a_n}{1-a_n}$ and $b_{n+1} \leq \frac{2b_n}{1-b_n}$ for all $n \in \N$ (for $n=1$, $\frac{2a_1}{1-a_1} = \frac{2b_n}{1-a_1} = 1$). Hence
$$\tau(C(a)) = \inf\limits_{n\in\N} \frac{1-a_n}{2a_n} \leq \frac{1-a_2}{2a_2} = \frac{1-\frac{1}{2\ve+1}}{\frac{2}{2\ve+1}} =\ve,$$
$$\tau(C(b)) = \inf\limits_{n\in\N} \frac{1-b_n}{2b_n} \leq \frac{1-b_1}{2b_1} = \frac{1-\frac{1}{3}}{\frac{2}{3}} =1,$$
so
$$\tau(C(a))\cdot \tau(C(b)) \leq \ve.$$

\end{proof}

The characterization of the cases when the set $C(a)-C(a)$ is the interval $[-1,1]$ or a finite union of closed intervals has been already proved with use of various methods (see \cite{AI}, \cite{FN}, \cite{LFGZJ}). However, this result also easily follows from Theorem \ref{tw1}.
\begin{corollary} \label{w1}
Let $a = (a_n) \in (0,1)^{\N}$. Then $C(a) - C(a)$ is equal to:
\begin{itemize}
\item[(1)] the interval $[-1, 1]$ if and only if $a_n \leq \frac{1}{3}$ for all $n \in \N$;

\item[(2)] a finite union of closed intervals if and only if the set $\{n \in \N \colon a_n > \frac{1}{3}\}$ is finite.

\end{itemize}
\end{corollary}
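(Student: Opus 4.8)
The plan is to derive Corollary \ref{w1} directly from Theorem \ref{tw1} by specializing to the case $b=a$. When $b=a$ we have $g_n=d_n$ for all $n\in\N\cup\{0\}$, so the ratios appearing in conditions $(\ast)$ and $(\ast\ast)$ simplify enormously: $\frac{d_n}{g_n}=\frac{g_n}{d_n}=1$, and $\frac{g_{n+1}}{d_n}=\frac{d_{n+1}}{g_n}=\frac{d_{n+1}}{d_n}=\frac{1-a_{n+1}}{2}$. Thus condition $(\ast\ast)$ becomes "$1\geq a_{n+1}$ and $1\geq a_{n+1}$", which is automatically true since $a_{n+1}\in(0,1)$. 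Condition $(\ast)$ becomes "$\frac{1-a_{n+1}}{2}\geq a_{n+1}$ or $\frac{1-a_{n+1}}{2}\geq a_{n+1}$", i.e.\ simply $\frac{1-a_{n+1}}{2}\geq a_{n+1}$, which rearranges to $a_{n+1}\leq\frac13$. So for $b=a$, condition $(\ast)$ at index $n$ is equivalent to $a_{n+1}\leq\frac13$, and condition $(\ast\ast)$ is vacuous.

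With this reduction in hand, part (1) is immediate: by Theorem \ref{tw1}(1), if $a_n\leq\frac13$ for all $n\in\N$ then $(\ast)$ and $(\ast\ast)$ hold for all $n\in\N\cup\{0\}$, hence $C(a)-C(a)=[-1,1]$; conversely, by Theorem \ref{tw1}(3), if $C(a)-C(a)=[-1,1]$ then $(\ast)$ holds for all $n\in\N\cup\{0\}$, which means $a_{n+1}\leq\frac13$ for all $n\in\N\cup\{0\}$, i.e.\ $a_n\leq\frac13$ for all $n\in\N$. Similarly part (2) follows: if $\{n:a_n>\frac13\}$ is finite, then $(\ast)$ holds for sufficiently large $n$ (and $(\ast\ast)$ always holds), so by Theorem \ref{tw1}(2) the set $C(a)-C(a)$ is a finite union of closed intervals; conversely, by Theorem \ref{tw1}(4), if $C(a)-C(a)$ is a finite union of closed intervals then $(\ast)$ holds for sufficiently large $n$, i.e.\ $a_n\leq\frac13$ for all but finitely many $n$, so $\{n:a_n>\frac13\}$ is finite.

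There is essentially no obstacle here; the only thing requiring a moment's care is tracking the index shift between "$(\ast)$ at $n$" and "$a_{n+1}\leq\frac13$", so that one correctly concludes the statement is about all $a_n$ with $n\in\N$. I would state the computation $g_n=d_n$ and the two simplifications as a short displayed line, then invoke the four parts of Theorem \ref{tw1} as above. One may optionally remark that part (2) genuinely strengthens part (1) and recovers Kraft's original dichotomy for constant sequences as the special case $a_n\equiv\alpha$.

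\begin{proof}
Taking $b=a$ we have $g_{n}=d_{n}$ for every $n\in\N\cup\{0\}$, so
$$\frac{d_{n}}{g_{n}}=\frac{g_{n}}{d_{n}}=1
\quad\text{and}\quad
\frac{g_{n+1}}{d_{n}}=\frac{d_{n+1}}{g_{n}}=\frac{d_{n+1}}{d_{n}}=\frac{1-a_{n+1}}{2}.$$
Since $a_{n+1}\in(0,1)$, condition $(\ast\ast)$ from Theorem \ref{tw1} holds automatically for all $n\in\N\cup\{0\}$, while condition $(\ast)$ at index $n$ reduces to $\frac{1-a_{n+1}}{2}\geq a_{n+1}$, that is, to $a_{n+1}\leq\frac{1}{3}$.

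Ad (1) If $a_{n}\leq\frac13$ for all $n\in\N$, then $(\ast)$ and $(\ast\ast)$ hold for all $n\in\N\cup\{0\}$, so by Theorem \ref{tw1}(1) we get $C(a)-C(a)=[-1,1]$. Conversely, if $C(a)-C(a)=[-1,1]$, then by Theorem \ref{tw1}(3) condition $(\ast)$ holds for all $n\in\N\cup\{0\}$, hence $a_{n+1}\leq\frac13$ for all $n\in\N\cup\{0\}$, i.e.\ $a_{n}\leq\frac13$ for all $n\in\N$.

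Ad (2) If $\{n\in\N\colon a_{n}>\frac13\}$ is finite, then $(\ast)$ holds for sufficiently large $n$, and $(\ast\ast)$ always holds, so by Theorem \ref{tw1}(2) the set $C(a)-C(a)$ is a finite union of closed intervals. Conversely, if $C(a)-C(a)$ is a finite union of closed intervals, then by Theorem \ref{tw1}(4) condition $(\ast)$ holds for sufficiently large $n$, that is, $a_{n}\leq\frac13$ for all but finitely many $n$, so the set $\{n\in\N\colon a_{n}>\frac13\}$ is finite.
\end{proof}
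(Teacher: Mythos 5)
Your proposal is correct and follows essentially the same route as the paper: specialize Theorem \ref{tw1} to $b=a$, observe that $g_n=d_n$ makes $(\ast\ast)$ automatic and reduces $(\ast)$ to $\frac{1-a_{n+1}}{2}\geq a_{n+1}$, i.e.\ $a_{n+1}\leq\frac13$, and then invoke the four parts of the theorem. The index-shift bookkeeping is handled correctly, so there is nothing to add.
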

\begin{proof}
For any $n$ we have $\frac{d_n}{g_n} =\frac{d_n}{d_n} = 1$ and $a_n < 1,$ so condition $(**)$ holds for all $n \in \N \cup \{0\}$. Since $\frac{d_n}{d_{n-1}} = \frac{1-a_n}{2}$, the inequality $\frac{d_n}{d_{n-1}} \geq a_n$ is equivalent to $a_{n} \leq \frac{1}{3}.$
From Theorem \ref{tw1} we obtain the assertion.
\end{proof}

In the end of this section let us recall the notion of the Hausdorff dimension (see \cite{F}). For $s>0$, the $s$-dimensional Hausdorff measure of a set 
$E\subset\mathbb{R}$ is defined by the formula 
$H^s(E):=\lim_{\delta\to 0^+}H^s_\delta(E)$ where 
\begin{equation*}
H^s_\delta(E):=\inf\left\{\sum_{i=1}^\infty (\on{diam} I_i)^s\colon
E\subset\bigcup_{i=1}^\infty I_i,\;\on{diam} I_i\leq\delta\right\}.
\end{equation*}
The Hausdorff dimension of $E$ is then given by the formula 
\begin{equation*}
\on{dim}_H(E):=\sup\{s>0\colon H^s(E)>0\}=\inf\{s>0\colon H^s(E)<\infty\}.
\end{equation*}
The determination of the exact Hausdorff measure of various types of Cantor
sets or their sum is an important problem, investigated by many authors (see
e.g. \cite{PP}, \cite{CHW}, \cite{HS} ). For a central Cantor set we have the formula for its Hausdorff dimension, given by Kardos.

\begin{theorem}[\protect\cite{Ka}]
\label{Kar} The Hausdorff dimension of a symmetric Cantor set $C(a)$ is
equal to 
\begin{equation*}
\liminf\limits_{n\to\infty}\frac{n\ln 2}{-\ln d_n}.
\end{equation*}
\end{theorem}

Let $C(a)$ and $C(b)$ be the middle-$\alpha$ and middle-$\beta$ Cantor sets, respectively. In \cite{PS} the authors proved that if $\frac{\ln\frac{1-\alpha}{2}}{\ln \frac{1-\beta}{2}}$ is irrational, then $\on{dim}_H(C(a)+C(b))=\min\{\on{dim}_H(C(a))+\on{dim}_H(C(b)),1\}.$
The following result shows that this is not generally true for arbitrary central Cantor sets.

\begin{proposition}
There are sequences $a,b \in (0,1)^\N$ such that $C(a)-C(b)=[-1,1]$ and 
$\on{dim}_H(C(a))=\on{dim}_H(C(b)) =0$.
\end{proposition}
\begin{proof}
We will define sequences $a$ and $b$ inductively. First, let $a_1 = 1-\frac{2}{2^2}=\frac{1}{2}$ and $b_1 = \frac{1-a_1}{2}$. Then $a_1 < 1 = \frac{g_0}{d_0}$ and 
$b_1 = \frac{d_1}{g_0},$ so conditions $(*)$ and $(**)$ are satisfied for $n=0$. Suppose that for some $n,k \in \N$ we have defined $a_i,b_i$ for $i \leq n$ in such a way that conditions $(*)$ and $(**)$ are satisfied for all $i < n$ and $a_n \geq 1-\frac{2}{ d_{n-1}\cdot 2^{n\cdot 2^k}}$. 
We will define terms $a_{n+i}, b_{n+i}$ for $ i \in \N$, until we can choose $b_{n+j}\geq 1-\frac{2}{g_{n+j-1}\cdot 2^{(n+j)\cdot 2^{k+1}} }$ for some $j \in \N$.   
If $\frac{d_{n}}{g_n} \geq 1$, then put $b_{n+1}=1-\frac{2}{g_{n}\cdot 2^{(n+1)\cdot2^{k+1}} }.$ If $\frac{d_{n}}{g_n} < 1$, then put $b_{n+1} = \frac{d_n}{g_n}$. Choose $a_{n+1} < b_{n+1}$ such that $a_{n+1} \leq \frac{g_{n+1}}{d_n}.$ Observe that $$\frac{d_{n+1}}{g_{n+1}} = \frac{d_n}{g_n}\cdot \frac{1-a_{n+1}}{1-b_{n+1}} > \frac{d_n}{g_n}.$$ 
If $\frac{d_{n+1}}{g_{n+1}} \geq 1$, then put $b_{n+2}=1-\frac{2}{g_{n+1}\cdot2^{(n+2)\cdot2^{k+1}} }.$ If $\frac{d_{n+1}}{g_{n+1}} < 1$, then put $b_{n+2} = \frac{d_{n+1}}{g_{n+1}}$. Choose $a_{n+2} <a_{n+1}< b_{n+1}<b_{n+2}$ such that $a_{n+2} \leq \frac{g_{n+2}}{d_{n+1}}.$ Repeating this procedure, we find $j\in \N$ such that $\frac{d_{n+j}}{g_{n+j}} \geq 1.$ Indeed,
observe that a sequence $\left(\frac{1-a_{n+i}}{1-b_{n+i}} \right)_{i}$ is increasing with terms greater than $1$, thus there is $j \in \N$ such that
$$\frac{d_{n+j}}{g_{n+j}} = \frac{d_n}{g_n} \cdot \frac{1-a_{n+1}}{1-b_{n+1}}\cdot \dots \cdot \frac{1-a_{n+j}}{1-b_{n+j}} \geq 1.$$
Put $b_{n+j+1}=1-\frac{2}{g_{n+j}\cdot2^{(n+j+1)\cdot 2^{k+1}} }$ and $a_{n+j+1} = \frac{g_{n+j+1}}{d_{n+j}}$. 
Since $\frac{d_{n+j}}{g_{n+j}} \geq 1,$ we have $b_{n+j+1} < \frac{d_{n+j}}{g_{n+j}}.$ So, because $b_{n+i} \leq \frac{d_{n+i-1}}{g_{n+i-1}}$ and $a_{n+i} \leq \frac{g_{n+i}}{d_{n+i-1}}$ for $i \leq j+1$, conditions $(*)$ and $(**)$ hold for $0, 1,\dots, n+j$. 

Now, we analogously define $a_i, b_i$ for $i \in \{n+j+2, n+j+3, \dots, m\}$, where $m$ is sufficiently large, in such a way that $\frac{g_{m-1}}{d_{m-1}} \geq 1$, $b_i \leq \frac{d_{i}}{g_{i-1}}$ for $n+j+1 < i \leq m$, $a_i = \frac{g_{i-1}}{d_{i-1}}$ for $n+j+1 < i < m$ and $a_{m} = 1-\frac{2}{d_{m-1}\cdot 2^{m\cdot 2^{k+1}} }$. 

We have defined inductively sequences $a$ and $b \in (0,1)^\N$ such that $(*)$ and $(**)$ hold for $n \in \N \cup \{0\}$ and for every $N,k \in \N$ there are $m,j \geq N$ such that $a_m \geq 1-\frac{2}{d_{m-1}\cdot2^{m\cdot2^k}}$ and $b_j \geq 1-\frac{2}{g_{j-1}\cdot 2^{j\cdot 2^k} }$. 

If $a_n \geq 1-\frac{2}{d_{n-1}\cdot 2^{n\cdot 2^k} }$, then
$$\frac{n\cdot \ln 2}{-\ln d_n} = \frac{n\cdot \ln 2}{\ln \frac{2}{d_{n-1}(1-a_n)}} \leq \frac{n\cdot \ln 2}{\ln 2^{n\cdot 2^k}} = \log_{2^{n\cdot 2^k}} \,2^n = \frac{1}{2^k}.$$
Therefore, for any $k\in \N$
$$\on{dim}_H(C(a)) = \liminf\limits_{n\to \infty}  \frac{n\cdot \ln 2}{-\ln d_n} \leq \frac{1}{2^k},$$
and so $\on{dim}_H(C(a)) = 0.$ 
Similarly,
$\on{dim}_H(C(b)) =0.$

\end{proof}

%
%
\section{Equivalent condition for the algebraic difference of central Cantor sets to be an interval}

Theorem \ref{tw1} from the previous section is useful, but it does not give us an equivalent condition for the algebraic difference of central Cantor sets to be an interval. The condition $(**)$ is not necessary. For instance, it cannot be satisfied for all $n$ if $C(a)$ and $C(b)$ are different middle Cantor sets and we know from the Newhouse gap lemma that their difference can be an interval for some proper constant sequences $a$ and $b$. In this section we will introduce another condition, which will then let us give the characterization of the case, when the algebraic difference of central Cantor sets is an interval. 

First, we introduce some new notation.  Take $a,b \in (0,1)^\N$.
To consider less cases, we will sometimes use symbols $\widehat{1}$ and $\widehat{2}$ instead of $1$ and $2$ in sequences with elements from the set $\{0,1,2,3\}$. If $\widehat{1}$ appears on the $n$-th place of a sequence, then it is equal to $1$ if $g_{n-1}-g_{n} \leq d_{n-1}-d_n$ or it is equal to $2$ otherwise. Then, $\widehat{2} = 3-\widehat{1}.$ That is, $\widehat{1}$ is equal to $1$ and $\widehat{2}=2$ if $l(J_{s\ha 1}) \leq l(J_{s \ha 2})$ for any $s \in \{0,1,2,3\}^{n-1}$. Otherwise, $\widehat{1}=2$ and $\widehat{2}=1$.
We will use a standard arithmetic on the set $\{0,\wh{1},\wh{2},3\}$. In particular, $0+\wh{1} = \wh{1}$, $\wh{1} +\wh{1} = \wh{2}$ and $\wh{2} + \wh{1} = 3.$ Also put
$$L_n:= \min\{d_{n-1}-d_n,g_{n-1}-g_n\}$$
and
$$M_n:=\max\{d_{n-1}-d_n,g_{n-1}-g_n\}.$$
So, $$l(J_{s\ha \widehat{1}}) = l(J_s) + L_n = l(J_{s\ha 0})+L_n$$
and
$$l(J_{s\ha \widehat{2}}) = l(J_s) + M_n.$$
Moreover,
$$l(J_{s\ha 3}) = r(J_s) - d_{n+1}-g_{n+1} = l(J_s) + d_n+g_n - d_{n+1} - g_{n+1} = l(J_s) + L_n + M_n = l(J_{s\ha \wh{2}})+ L_n.$$
 
Now, suppose that for some $n\in \N$ condition $(*)$ is satisfied, but $(**)$ is not. Then, by Lemma \ref{l3}, for any $s \in \{0,1,2,3\}^{n}$ we have $J_{s\ha 1} \cap J_{s \ha 2} = \emptyset$. Also
$J_{s\ha 0} \cap J_{s\ha \widehat{1}} \neq \emptyset$ and $J_{s\ha \widehat{2}} \cap J_{s\ha 3} \neq \emptyset$. This way there appears a gap $(r(J_{s\ha \widehat{1}}), l(J_{s\ha \widehat{2}}))$ in $J_s$. We will denote it by $G_s$. So,
$$G_s = (l(J_s) + L_{n+1} +d_{n+1}+g_{n+1}, l(J_s)+M_{n+1}).$$

%

\begin{proposition}\label{nowy war}
Assume that $a\in (0,1)^{\mathbb{N}%
},b\in (0,1)^{\mathbb{N}}$ and $n\in \mathbb{N}\cup \left\{
0\right\} $. If condition $(*)$ holds for $n$ and
$$
(***)\,\,\, \exists_{m \in \N \cup \{0\}, m\leq n}  \left\{ \begin{array}{ccc}
\forall_{k\in \N \cup \{0\}, k<m}\,\, \sum_{i=0}^{k} L_{n+1-i} +g_{n+1}+d_{n+1} \geq L_{n-k} \\ 
\sum_{i=0}^{m} L_{n+1-i} +g_{n+1}+d_{n+1} \geq M_{n+1}
\end{array},
\right. $$
then $C_{n+1}\left( a\right) -C_{n+1}\left( b\right) =C_{n}\left( a\right)
-C_{n}\left( b\right) $.
\end{proposition}
\begin{proof}
Assume that $(*)$ and $(***)$ hold for $n$.

If $m$ from $(***)$ is equal to $0$, then we have
$$L_{n+1}+g_{n+1}+d_{n+1}\geq M_{n+1},$$
that is, for any $s \in \{0,1,2,3\}$
$$l(J_s)+L_{n+1}+g_{n+1}+d_{n+1}\geq l(J_s)+M_{n+1},$$
so
$$l(J_{s\ha \wh{1}})+g_{n+1}+d_{n+1} \geq l(J_{s\ha \wh{2}}).$$
Hence $$r(J_{s\ha \wh{1}}) \geq l(J_{s\ha \wh{2}}),$$
which means that $J_{s\ha 1} \cap J_{s\ha 2} \neq \emptyset$ and this is equivalent to $(**)$, by Lemma \ref{l3}.
Therefore, we have the assertion.

Now, suppose that $m > 0$. Of course, $C_{n+1}(a)-C_{n+1}(b) \subset C_n(a)-C_n(b) = \bigcup_{s \in \{0,1,2,3\}^n} J_s$.
From Lemma \ref{l3} we infer that for any $s \in \{0,1,2,3\}^n$
\begin{equation*}
J_{s\symbol{94}%
0}\cap J_{s\symbol{94}\wh{1}}\neq \emptyset \;\text{ and }\; J_{s\symbol{94}\wh{2}}\cap J_{s%
\symbol{94}3}\neq \emptyset. 
\end{equation*}%
Hence $$J_s = J_{s\ha 0} \cup  J_{s\ha 1} \cup  J_{s\ha 2} \cup  J_{s\ha 3} \cup G_s.$$ By the definition, 
$J_{s\ha 0} \cup  J_{s\ha 1} \cup  J_{s\ha 2} \cup  J_{s\ha 3} \subset C_{n+1}(a)-C_{n+1}(b)$.
So, to finish the proof, we need to show that $G_s \subset C_{n+1}(a)-C_{n+1}(b)$ for any $s \in \{0,1,2,3\}^n$. 
First, we will inductively prove that for every $k \in \{1,2,\dots, m\}$ we have\begin{equation}\label{ind}
 \left( l(G_{t\ha j_1\ha j_2 \ha \dots \ha j_k}), l(J_{t\ha j_1\ha j_2 \ha \dots \ha j_k}) + \sum_{i=0}^{k} L_{n+1-i} +g_{n+1}+d_{n+1}\right] \subset C_{n+1}(a)-C_{n+1}(b)
\end{equation}
for all $t \in \{0,1,2,3\}^{n-k}$ and $j_1,j_2,\dots, j_k \in \{0,\wh{2}\}$.

By $(***)$, we have
$$L_{n+1}+g_{n+1}+d_{n+1} \geq L_n.$$
Hence for any $t\in\{0,1,2,3\}^{n-1}$
$$l(J_t)+ L_{n+1}+g_{n+1}+d_{n+1} \geq l(J_t) + L_n,$$
so
$$l(G_{t\ha 0}) \geq l(J_{t\ha \wh{1}\ha 0}).$$ 
Also $$l(J_t)+M_n+ L_{n+1}+g_{n+1}+d_{n+1} \geq l(J_t) + L_n+M_n,$$
so
$$l(G_{t\ha \wh{2}}) \geq l(J_{t\ha 3\ha 0}).$$
Since $(*)$ holds for $n$, we know that $[l(J_{t\ha (j+\wh{1})\ha 0}),r(J_{t\ha (j+\wh{1}) \ha \wh{1}}] \subset C_{n+1}(a)-C_{n+1}(b),$
where $j \in \{0,\wh{2}\}.$
Thus, 
$$(l(G_{t\ha j}), l(J_{t\ha j}) + L_n +L_{n+1}+ d_{n+1}+g_{n+1}]=(l(G_{t\ha j}), r(J_{t\ha (j+\wh{1}) \ha \wh{1}}] $$$$\subset [l(J_{t\ha (j+\wh{1})\ha 0}),r(J_{t\ha (j+\wh{1}) \ha \wh{1}}] \subset C_{n+1}(a)-C_{n+1}(b),$$
which proves that (\ref{ind}) is satisfied for $k=1$.

Now, suppose that (\ref{ind}) is satisfied for some $k<m$. 
Then, by $(***)$
$$\sum_{i=0}^{k} L_{n+1-i} +g_{n+1}+d_{n+1} \geq L_{n-k},$$
and therefore for any $t\in\{0,1,2,3\}^{n-k-1}$ and $j_1,j_2, \dots, j_{k+1} \in \{0,\wh{2}\}$ we have
$$ l(J_{t\ha j_1\ha j_2 \ha \dots \ha j_{k+1}}) + \sum_{i=0}^{k} L_{n+1-i} +g_{n+1}+d_{n+1} \geq l(J_{t\ha j_1\ha j_2 \ha \dots \ha j_{k+1}}) + L_{n-k} = l(J_{t \ha (j_1 + \wh{1}) \ha j_2 \dots \ha j_{k+1}}).$$
Since $(*)$ holds for $n$ we have
$$\left[l(J_{t \ha (j_1 + \wh{1}) \ha j_2 \dots \ha j_{k+1}\ha 0}), r( J_{t \ha (j_1 + \wh{1}) \ha j_2 \dots \ha j_{k+1}\ha \wh{1}}) \right] \subset C_{n+1}(a)-C_{n+1}(b).$$
Moreover, since (\ref{ind}) is satisfied for $k$, we have 
$$\left( l(G_{t\ha (j_1+\wh{1})\ha j_2 \ha \dots \ha j_{k+1}}), l(J_{t\ha (j_1+\wh{1})\ha j_2 \ha \dots \ha j_{k+1}}) + \sum_{i=0}^{k} L_{n+1-i} +g_{n+1}+d_{n+1}\right] \subset C_{n+1}(a)-C_{n+1}(b)$$
and
$$\left( l(G_{t\ha j_1\ha j_2 \ha \dots \ha j_{k+1}}), l(J_{t\ha j_1\ha j_2 \ha \dots \ha j_{k+1}}) + \sum_{i=0}^{k} L_{n+1-i} +g_{n+1}+d_{n+1}\right] \subset C_{n+1}(a)-C_{n+1}(b).$$
Because
$$ l(J_{t\ha j_1\ha j_2 \ha \dots \ha j_{k+1}}) + \sum_{i=0}^{k} L_{n+1-i} +g_{n+1}+d_{n+1} \geq  l(J_{t \ha (j_1 + \wh{1}) \ha j_2 \dots \ha j_{k+1}})$$
and
$$l(J_{t\ha (j_1+\wh{1})\ha j_2 \ha \dots \ha j_{k+1}}) = l(J_{t\ha j_1\ha j_2 \ha \dots \ha j_{k+1}}) + L_{n-k},$$
we obtain
$$\left( l(G_{t\ha j_1\ha j_2 \ha \dots \ha j_{k+1}}), l(J_{t\ha j_1\ha j_2 \ha \dots \ha j_{k+1}}) + \sum_{i=0}^{k+1} L_{n+1-i} +g_{n+1}+d_{n+1}\right] \subset C_{n+1}(a)-C_{n+1}(b).$$
By induction, (\ref{ind}) holds for all $k\leq m$. 

By $(***)$, 
$$\sum_{i=0}^{m} L_{n+1-i} +g_{n+1}+d_{n+1} \geq M_{n+1}.$$
Hence for any $t \in \{0,1,2,3\}^{n-m}
$ and all $j_1,j_2,\dots,j_m \in \{0,\wh{2}\}$ we have
$$l(J_{t\ha j_1 \ha \dots \ha j_m}) +\sum_{i=0}^{m} L_{n+1-i} +g_{n+1}+d_{n+1} \geq  l(J_{t\ha j_1 \ha \dots \ha j_m}) + M_{n+1} = r(G_{t\ha j_1 \ha \dots \ha j_m}).$$
Therefore, knowing that (\ref{ind}) holds for $m$, we get
$$G_{t\ha j_1 \ha \dots \ha j_m} \subset \left( l(G_{t\ha j_1\ha j_2 \ha \dots \ha j_{m}}), l(J_{t\ha j_1\ha j_2 \ha \dots \ha j_{m}}) + \sum_{i=0}^{m} L_{n+1-i} +g_{n+1}+d_{n+1}\right] \subset C_{n+1}(a)-C_{n+1}(b).$$

Similarly, we prove that for any
$k \leq m$, $t \in \{0,1,2,3\}^{n-m}$ and all $j_1,j_2,\dots, j_m \in \{\wh{1}, 3\}$
$$\left[r(J_{t\ha j_1\ha j_2 \ha \dots \ha j_k}) - \sum_{i=0}^{k} L_{n+1-i} -g_{n+1}-d_{n+1},r(G_{t\ha j_1\ha j_2 \ha \dots \ha j_k})\right) \subset C_{n+1}(a)-C_{n+1}(b)$$
and thus, using $(***),$ we obtain
$$G_{t\ha j_1 \ha \dots \ha j_m} \subset C_{n+1}(a)-C_{n+1}(b).$$

Now, we will show that for any $s \in \{0,1,2,3\}^n$, $G_s \subset C_{n+1}(a)-C_{n+1}(b).$
For $s \in \{0,1,2,3\}^n$ put $N_s:=\max\{i \leq n\colon s_{i} \in \{\wh{1},3\}\}$ if $s_n \in \{0,\wh{2}\}$ or $N_s:=\max\{i \leq n\colon s_{i} \in \{0,\wh{2}\}\}$ if $s_n \in \{\wh{1},3\}$. If $\{i \leq n\colon s_{i} \in \{\wh{1},3\}\} = \emptyset$ or $\{i \leq n\colon s_{i} \in \{0,\wh{2}\}\} = \emptyset,$ then we put $N_s := 0$.
Using induction with respect to $N_s$, we will prove that if $N_s \geq 0$, then $G_s \subset C_{n+1}(a)-C_{n+1}(n).$
If $N_s \leq n- m$, then we have already proved that $G_s \subset C_{n+1}(a)-C_{n+1}(b)$. Assume that for some $k \geq n-m$ we have proved that if $N_s \leq k$, then $G_s \subset C_{n+1}(a)-C_{n+1}(b).$ Let $s\in \{0,1,2,3\}^n$ be such that $N_s = k+1$. Suppose that $s_n \in \{0,\wh{2}\}$ (the proof when $s_n \in \{\wh{1},3\}$ is similar).
By (\ref{ind}), we have 
$$
\left( l(G_{s}), l(J_{s}) + \sum_{i=0}^{n-k-1} L_{n+1-i} +g_{n+1}+d_{n+1}\right] \subset C_{n+1}(a)-C_{n+1}(b).$$
Observe that
$$l(J_{s}) + \sum_{i=0}^{n-k-1} L_{n+1-i} +g_{n+1}+d_{n+1} = r(J_{t\ha\wh{1}})=l(G_t),$$
where $t=(s|(k+1))\ha (s_{k+2}+\wh{1}) \ha \dots \ha (s_{n}+\wh{1})$.
Since $t_{k+1} = s_{k+1} \in \{\wh{1},3\}$, we have $N_t \leq k$.
Therefore, by induction hypothesis, $G_t \subset C_{n+1}(a)-C_{n+1}(b).$
Hence $$G_s \subset (l(G_s), l(G_t)] \cup G_t \subset C_{n+1}(a) - C_{n+1}(b).$$
By induction, $G_s \subset C_{n+1}(a)-C_{n+1}(b)$ for any $s\in \{0,1,2,3\}^n,$
which finishes the proof.

\end{proof}
\begin{lemma}\label{lem o M1}
Assume that $a\in (0,1)^{\mathbb{N}%
},b\in (0,1)^{\mathbb{N}}$, $n \in \N$. Then there is $m\in \N \cup \{0\}$, $m \leq n$ such that 
\begin{equation}\label{n3}
\sum_{i=0}^m L_{n+1-i} + g_{n+1} + d_{n+1} \geq M_{n+1}.
\end{equation} 
\end{lemma}
\begin{proof}
Without loss of generality assume that $M_{n+1}=g_n-g_{n+1}$. If there is $k \in \{0, 1,\dots, n\}$ such that $L_{n+1-k} = g_{n-k} - g_{n-k+1}$, then, by Lemma \ref{lem1}, $L_{n+1-k}  \geq M_{n+1},$ so (\ref{n2}) holds for $m=k$. If there is no such $k$, then
$L_i = d_{i-1}-d_i$ for all $i \leq n+1$. Therefore,
$$\sum_{i=0}^n L_{n+1-i} +d_{n+1}+g_{n+1} = d_0-d_1+d_1-d_2+\dots+d_n-d_{n+1}+d_{n+1}+g_{n+1}$$$$=d_0+g_{n+1} > 1 > g_{n}-g_{n+1} = M_{n+1},$$
so (\ref{n3}) holds for $m=n$.
\end{proof}
\begin{lemma}\label{lem o M2}
Assume that $a\in (0,1)^{\mathbb{N}%
},b\in (0,1)^{\mathbb{N}}$. For all $n, k \in \N$ such that $n< k$ we have $L_n>L_k$ and $M_n > M_k$. 
\end{lemma}
\begin{proof}
It suffices to show that for all $n \in \N$ we have $L_{n+1} < L_n$ and $M_{n+1} < M_n$. Without loss of generality assume that $L_n = d_{n-1}-d_n$. If $L_{n+1} = d_n-d_{n+1}$, then the assertion follows from Lemma \ref{lem1}. Suppose that $L_{n+1}= g_n - g_{n+1}$. Then we have
$$L_{n+1} = g_{n}-g_{n+1} \leq d_n -d_{n+1} < d_{n-1}-d_n = L_n$$
and
$$M_{n+1} = d_n - d_{n+1} < d_{n-1}-d_n \leq g_{n-1}-g_n = M_n,$$
which finishes the proof.
\end{proof}

Now, we can prove the main theorem of this section.
\begin{theorem}\label{tw2}
Assume that $a\in (0,1)^{\mathbb{N}%
},b\in (0,1)^{\mathbb{N}}$. Then
\begin{itemize}
\item[(1)] $C(a)-C(b) = [-1,1]$ if and only if conditions $(*)$ and $(***)$ hold for all $n \in \N \cup \{0\}$.

\item[(2)] $C(a)-C(b)$ is a finite union of closed intervals if and only if there is $n_0 \geq 0$ such that conditions $(*)$ and $(***)$ hold for all $n \geq n_0$.
\end{itemize}
\end{theorem}
\begin{proof}
Ad (1)-(2) "$\Leftarrow$"

Assume that there is $n_0 \in \N$ such that conditions $(*)$ and $(***)$ hold for all $n\geq n_{0}$. From Proposition \ref{nowy war} it follows that $$%
C_{n}\left( a\right) -C_{n}\left( b\right) =C_{n_{0}}\left( a\right)
-C_{n_{0}}\left( b\right) $$ for $n\geq n_{0}$, and thus
\begin{equation*}
C\left( a\right) -C\left( b\right) =\bigcap\limits_{n\in \mathbb{N}}\left(
C_{n}\left( a\right) -C_{n}\left( b\right) \right) =C_{n_{0}}\left( a\right)
-C_{n_{0}}\left( b\right) =\bigcup\limits_{s\in \left\{ 0,1,2,3\right\}
^{n_{0}}}J_{s},
\end{equation*}%
so $C\left( a\right) -C\left( b\right) $ is a finite union of closed intervals. If $n_{0}=0$, that is, conditions
$(*)$ and $(***)$ hold for all $n$, then $C\left( a\right) -C\left( b\right)
=C_{0}\left( a\right) -C_{0}\left( b\right) =\left[ -1,1\right] .$

Ad (1) "$\Rightarrow$"
Suppose that $C(a)-C(b) = [-1,1]$. By Theorem \ref{tw1}, we know that $(*)$ holds for all $n \in \N \cup \{0\}$. Suppose that $(***)$ does not hold for some $n \in \N \cup \{0\}$. 
Take minimal $m \in \{0,1,\dots, n\}$ such that $$\sum_{i=0}^m L_{n+1-i} + g_{n+1} + d_{n+1} \geq M_{n+1}.$$ We know that it exists from Lemma \ref{lem o M1}. Since $(***)$ does not hold, we have $m>0$ and there is $k\in \N \cup \{0\},$ $k<m$ such that 
$$\sum_{i=0}^k L_{n+1-i} + g_{n+1} + d_{n+1} < L_{n-k}.$$ 
By the minimality of $m$, we also have 
$$\sum_{i=0}^k L_{n+1-i} + g_{n+1} + d_{n+1} < M_{n+1}.$$
Put $K:= \min\{-1+M_{n+1},-1+L_{n-k}\}=\min\{r(G_t), l(J_{0^{(n-k-1)}\ha \wh{1}})\}$.
Take $$x \in \left(-1+\sum_{i=0}^k L_{n+1-i} + g_{n+1} + d_{n+1},K\right).$$
Observe that 
$$l(G_{0^{(n)}}) = -1 + L_{n+1}+d_{n+1}+g_{n+1} \leq -1+\sum_{i=0}^k L_{n+1-i} + g_{n+1} + d_{n+1} <x< K\leq -1 +M_{n+1} = r(G_{0^{(n)}}),$$
so $x \in G_{0^{(n)}}.$
By the assumption, $x \in C_{n+1}(a)-C_{n+1}(b)$, so there is $t \in \{0,1,2,3\}^{n+1}$ such that $x \in J_t$. Since $x \in G_{0^{(n)}}$ and
$$x > -1+\sum_{i=0}^k L_{n+1-i} + g_{n+1} + d_{n+1} = r(J_{0^{(n-k)}\ha \wh{1}^{(k+1)}}),$$
there is either $j\leq n-k$ such that $t_j > 0$ or there is $j \leq n$ such that $t_j = \wh{2}$ or $t_j=3$.  
Observe that the former case is impossible. Indeed, if  $t_j = \wh{2}$ or $t_j = 3$, then, by Lemma \ref{lem o M2}, 
$$x \geq l(J_t) \geq -1 + M_j > -1 +M_{n+1} > x,$$
a contradiction. 
So, there is $j\leq n-k$ such that $t_j > 0$. hence
$$x \geq l(J_t) \geq -1 + L_j \geq -1 + L_{n-k} \geq K > x,$$
a contradiction. 
Therefore, $(***)$ holds for all $n \in \N \cup \{0\}$.

Ad (2) "$\Rightarrow$"
Suppose that $C(a)-C(b)$ is a finite union of intervals. So, there is $w > 0$ such that $[-1, -1+w] \subset C(a)-C(b).$ By Theorem \ref{tw1}, we know that there is $k \geq 0$ such that $(*)$ holds for all $n \geq k$. Suppose that $(***)$ does not hold for infinitely many $n$. Choose $n \in \N$ such that $(***)$ does not hold for $n$ and $J_{0^{(n)}} \subset [-1,-1+w]$. The rest of the proof is identical as in the part Ad (1) "$\Rightarrow$". 
\end{proof}

The next result shows that the condition from Newhouse gap lemma is not only sufficient, but it is also necessary to obtain an interval as an algebraic difference of middle-$\alpha$ and middle-$\beta$ Cantor sets if $\frac{\ln \frac{1-\alpha}{2}}{\ln \frac{1-\beta}{2}}$ is irrational. This fact has been already noticed by Pourbarat in \cite{P}.

\begin{corollary}\label{WnP}
Let $a$ be a sequence with all terms equal to $\alpha$ and let $b$ be a sequence with all terms equal to $\beta$, where $\alpha, \beta \in (0,1)$ are such that $\frac{\ln \frac{1-\alpha}{2}}{\ln \frac{1-\beta}{2}}$ is irrational.
Then the following conditions are equivalent:
\begin{itemize}
\item[(i)] $\beta \leq \frac{1-\alpha}{1+3\alpha}$;

\item[(ii)] $\tau(C(a))\cdot \tau(C(b)) \geq 1$;

\item[(iii)] $C(a)-C(b) = [-1,1]$.
\end{itemize}
\end{corollary}
\begin{proof}
(i) $\Rightarrow$ (ii)
Since obviously $\alpha<\frac{2\alpha}{1-\alpha}$ and $\beta<\frac{2\beta}{1-\beta}$, by Lemma \ref{l2}, we have $\tau(C(a))= \frac{1-\alpha}{2\alpha}$ and $\tau(C(b)) = \frac{1-\beta}{2\beta}.$
Therefore
$$\tau(C(a))\cdot \tau(C(b)) = \frac{1-\alpha}{2\alpha}\cdot  \frac{1-\beta}{2\beta} \geq \frac{1-\alpha}{2\alpha} \cdot \frac{1-\frac{1-\alpha}{1+3\alpha}}{2\cdot\frac{1-\alpha}{1+3\alpha}} = \frac{1-\alpha}{2\alpha} \cdot \frac{4\alpha}{1+3\alpha}\cdot\frac{1+3\alpha}{2\cdot(1-\alpha)} = 1.$$

(ii) $\Rightarrow$ (iii)
It follows directly from Newhouse gap lemma.

(iii) $\Rightarrow$ (i)


Without loss of generality we can assume that $\alpha < \beta$, because $$\beta \leq \frac{1-\alpha}{1+3\alpha} \Leftrightarrow \alpha \leq \frac{1-\beta}{1+3\beta}.$$ 

Now, observe that for $n > 1$ we have $L_n =g_{n-1}-g_n$. Indeed, since $\beta > \alpha$, we have 
$\frac{1-\beta}{1-\alpha} < 1$, and so the sequence
$\left(\frac{g_n}{d_n}\right) = \left(\frac{(1-\beta)^n}{(1-\alpha)^n}\right)$ is decreasing.
Therefore, the sequence
$\left( \frac{g_{n-1}-g_n}{d_{n-1}-d_n} \right) =\left(\frac{g_{n-1}}{d_{n-1}}\cdot\frac{1-\frac{1-\beta}{2}}{1-\frac{1-\alpha}{2}}\right)$ is is also decreasing and
$$g_1-g_2= \frac{1-\beta}{2}\cdot \left(1-\frac{1-\beta}{2}\right) =\frac{1-\beta}{2}\cdot\frac{1+\beta}{2} = \frac{1-\beta^2}{4} < \frac{1-\alpha^2}{4} = d_1-d_2.$$
Hence $L_n = g_{n-1}-g_n$ for $n\geq 2$. 

Now, we will prove\\
{\bf Claim}\\
If for given $n \in \N$ and $k <n-1$
$$\sum_{i=0}^k L_{n+1-i} + g_{n+1}+d_{n+1} \geq L_{n-k},$$
then also
$$\sum_{i=0}^{k-1} L_{n+1-i} + g_{n+1}+d_{n+1} \geq L_{n-k+1}.$$

By the assumption, we have 
$$\sum_{i=0}^{k-1} L_{n+1-i} + g_{n+1}+d_{n+1} \geq L_{n-k} - L_{n+1-k}.$$
Hence it suffices to prove that $$L_{n-k} - L_{n+1-k} \geq L_{n-k+1}.$$
Since $n-k \geq 2$, we have
$$L_{n-k}-2L_{n+1-k} = g_{n-k-1}-g_{n-k}-2g_{n-k}+2g_{n-k+1}.$$
After division of the expression above by $g_{n-k-1}$ we obtain
$$1-3\cdot\frac{1-\beta}{2} + 2\cdot\frac{(1-\beta)^2}{4} =\frac{\beta}{2}+\frac{\beta^2}{2} \geq 0,$$
so $$L_{n-k} - 2L_{n+1-k} \geq 0.$$
Therefore, $$\sum_{i=0}^{k-1} L_{n+1-i} + g_{n+1}+d_{n+1} \geq L_{n-k} - L_{n+1-k},$$ which finishes the proof of Claim.

Now, we will prove that for any $n >1$
\begin{equation}\label{LiM}
L_{n+1}+g_{n+1}+d_{n+1}-g_n-d_n \geq M_{n+1}-M_n.
\end{equation}
Since $n >1$, (\ref{LiM}) is equivalent to
$$g_n-g_{n+1}+g_{n+1}+d_{n+1}-g_n-d_n \geq d_n-d_{n+1}-d_{n-1}+d_n,$$
and so to
$$d_{n-1}-3d_n+2d_{n+1}\geq 0.$$
Dividing the above inequality by $d_{n-1}$ we receive the inequality
$$1-\frac{3}{2}(1-\alpha) + \frac{1}{2}(1-\alpha)^2 \geq 0,$$
which is satisfied, because $(1-\alpha) < 1$.
This proves (\ref{LiM}).

Since $C(a)-C(b) = [-1,1]$, by Theorem \ref{tw2} condition $(***)$ holds for any $n$, that is, there is $m \leq n$ such that 
 $$\left\{ \begin{array}{ccc}
\forall_{k\in \N \cup \{0\}, k<m}\,\, \sum_{i=0}^{k} L_{n+1-i} +g_{n+1}+d_{n+1} \geq L_{n-k} \\ 
\sum_{i=0}^{m} L_{n+1-i} +g_{n+1}+d_{n+1} \geq M_{n+1}
\end{array}.
\right. $$
Since $M_{n+1} \to 0$, there is $K \in \N$ such that we have $\sum_{i=0}^{K-1} L_{K+1-i} +g_{K+1}+d_{K+1} \geq M_{K+1}$, (that is, $m<n$ for $n=K$).
Moreover, by (\ref{LiM}), if 
$$\sum_{i=0}^{m} L_{n+1-i} +g_{n+1}+d_{n+1} \geq M_{n+1},$$
then
$$\sum_{i=0}^{m+1} L_{n+2-i} +g_{n+2}+d_{n+2} = \sum_{i=0}^{m} L_{n+1-i} +g_{n+1}+d_{n+1} +L_{n+2}+g_{n+2}+d_{n+2}-g_{n+1}-d_{n+1} $$$$\geq M_{n+1}+M_{n+2}-M_{n+1}= M_{n+2}.$$
Thus, for $n \geq K$ we may assume that $m < n$.
Moreover, from Claim we infer that if
$$\sum_{i=0}^{m-1} L_{n+1-i}+g_{n+1}+d_{n+1} \geq L_{n-m+1},$$
then 
$$\sum_{i=0}^{k} L_{n+1-i} +g_{n+1}+d_{n+1} \geq L_{n-k}$$
for all $k < m$.
If $k < n$, then $n+1-k \geq 2$, so  we have
$$\sum_{i=0}^{k} L_{n+1-i}+g_{n+1}+d_{n+1} = \sum_{i=0}^{k} (g_{n-i}-g_{n-i+1}) +g_{n+1} + d_{n+1}$$$$= g_n - g_{n+1} + g_{n-1} -g_n + \dots +g_{n-k} -g_{n-k+1} + g_{n+1}+d_{n+1} = g_{n-k}+d_{n+1}.$$
Therefore, condition $(***)$ holds for $n \geq K$ if and only if there is $m < n$ such that 
\begin{equation}\label{nierownosci}
\left\{ \begin{array}{ccc}
g_{n-m+1} +d_{n+1} \geq g_{n-m} - g_{n-m+1} \\ 
g_{n-m} + d_{n+1} \geq d_n - d_{n+1}
\end{array}.
\right.
\end{equation}
We have
$$  g_{n-m+1} +d_{n+1} \geq g_{n-m} - g_{n-m+1} \Leftrightarrow 2\cdot\frac{g_{n-m+1}}{d_n}-\frac{g_{n-m}}{d_n} +\frac{1-\alpha}{2} \geq 0 $$
and
$$2\cdot \frac{g_{n-m+1}}{d_n}-\frac{g_{n-m}}{d_n} +\frac{1-\alpha}{2} = 2\cdot\left(\frac{1-\beta}{1-\alpha}\right)^n \cdot \left( \frac{2}{1-\beta}\right)^{m-1} - \left(\frac{1-\beta}{1-\alpha}\right)^n \cdot \left( \frac{2}{1-\beta}\right)^{m} + \frac{1-\alpha}{2}$$
$$=\left(\frac{1-\beta}{1-\alpha}\right)^n \cdot \left( \frac{2}{1-\beta}\right)^{m}\cdot(1-\beta-1) + \frac{1-\alpha}{2}= -\beta\cdot \left(\frac{1-\beta}{1-\alpha}\right)^n \cdot \left( \frac{2}{1-\beta}\right)^{m} +\frac{1-\alpha}{2}.$$
Thus, the first inequality in (\ref{nierownosci}) holds if and only if
\begin{equation*}
\left( \frac{2}{1-\beta}\right)^{m} \leq \frac{1-\alpha}{2\beta}\cdot \left(\frac{1-\alpha}{1-\beta}\right)^n,
\end{equation*}
which is equivalent to
\begin{equation*}
\left( \frac{2}{1-\beta}\right)^{m-n} \leq \frac{1-\alpha}{2\beta}\cdot \left(\frac{1-\alpha}{2}\right)^n
\end{equation*}
and finally it is equivalent to
\begin{equation}\label{n1}
\left( \frac{2}{1-\beta}\right)^{n-m} \geq \frac{2\beta}{1-\alpha}\cdot \left(\frac{2}{1-\alpha}\right)^n
\end{equation}
We also have
$$g_{n-m} + d_{n+1} \geq d_n - d_{n+1} \Leftrightarrow \frac{g_{n-m}}{d_n} + 2\cdot\frac{d_{n+1}}{d_n}-1 \geq 0$$ and
$$\frac{g_{n-m}}{d_n} + 2\cdot\frac{d_{n+1}}{d_n}-1 = \left(\frac{1-\beta}{1-\alpha}\right)^n \cdot \left( \frac{2}{1-\beta}\right)^{m} +1-\alpha-1.$$
Hence the second inequality in (\ref{nierownosci}) holds if and only if
\begin{equation*}
\left( \frac{2}{1-\beta}\right)^{m} \geq a\cdot \left(\frac{1-\alpha}{1-\beta}\right)^n,
\end{equation*}
which is equivalent to 
\begin{equation}\label{n2}
\left( \frac{2}{1-\beta}\right)^{n-m} \leq \frac{1}{\alpha}\cdot \left(\frac{2}{1-\alpha}\right)^n.
\end{equation}
We will now show that for any $n \geq K$ there is $m < n$ satisfying (\ref{n1}) and (\ref{n2}) only if
\begin{equation}\label{on}
\frac{1-\alpha}{2\alpha\beta} \geq \frac{2}{1-\beta}.
\end{equation}
Suppose that 
$$\frac{1-\alpha}{2\alpha\beta} < \frac{2}{1-\beta}.$$

It is well known that if for some $c,d \in \R$ we know that $\frac{c}{d}$ is irrational, then the set $\{n\cdot_{mod\, d} c\colon n\in \N\}$ is dense in $[0,d)$.
Using this fact for $c=\ln\frac{2}{1-\alpha}$ and $d=\ln\frac{2}{1-\beta}$, we get that the set \\$\{n\cdot_{mod\, \ln\frac{2}{1-\beta}} \frac{2}{1-\alpha}\colon n\in \N\}$ is dense in $\left[0,\ln\frac{2}{1-\beta}\right)$, and thus also the set  $$A:=\{n\cdot_{mod \ln\frac{2}{1-\beta}} \frac{2}{1-\alpha}+ _{mod\, \ln\frac{2}{1-\beta}} \ln\frac{2\beta}{1-\alpha} \colon n\in \N\}$$ is dense in $\left[0,\ln\frac{2}{1-\beta}\right)$. Since $$\frac{1-\alpha}{2\alpha\beta} < \frac{2}{1-\beta},$$
we have $\frac{4\alpha\beta}{(1-\alpha)(1-\beta)} > 1$. 
Because $C(a)-C(b) = [-1,1]$, we know by Theorem \ref{tw1} that $(*)$ is satisfied for $n = 0$, that is, $\alpha = a_1 \leq \frac{g_1}{d_0} =\frac{1-\beta}{2}$ or $\beta = b_1 \leq \frac{d_1}{g_0} =\frac{1-\alpha}{2}$. In the first case we have
$$\frac{4\alpha\beta}{(1-\alpha)(1-\beta)} \leq \frac{2\beta}{1-\alpha} < \frac{2}{1-\beta},$$
and in the second case,
$$\frac{4\alpha\beta}{(1-\alpha)(1-\beta)} \leq\frac{2\alpha}{1-\beta}< \frac{2}{1-\beta}.$$ 

Therefore in both cases, $\frac{4\alpha\beta}{(1-\alpha)(1-\beta)} \leq \frac{2}{1-\beta}$. Therefore, $\ln\frac{4\alpha\beta}{(1-\alpha)(1-\beta)} \in (0,\ln \frac{2}{1-\beta})$. Using the density of the set $A$, we find $n,k \in \N$ (we can choose $n \geq K$) such that 
$$n\cdot \ln\frac{2}{1-\alpha} +\ln\frac{2\beta}{1-\alpha} \in \left(k\cdot \ln\frac{2}{1-\beta},k\cdot \ln\frac{2}{1-\beta}+ \ln\frac{4\alpha\beta}{(1-\alpha)(1-\beta)}\right).$$
Then $$\frac{2\beta}{1-\alpha}\cdot\left( \frac{2}{1-\alpha}\right)^n \in \left(\left(\frac{2}{1-\beta}\right)^k,\left(\frac{2}{1-\beta}\right)^k \cdot  \frac{4\alpha\beta}{(1-\alpha)(1-\beta)}\right).$$
Therefore,
$$\left(\frac{2}{1-\beta}\right)^{k} < \frac{2\beta}{1-\alpha}\cdot\left( \frac{2}{1-\alpha}\right)^n < \left(\frac{2}{1-\beta}\right)^{k+1},$$ but
$$\frac{1}{\alpha}\cdot\left( \frac{2}{1-\alpha}\right)^n < \frac{1}{\alpha}\cdot\frac{1-\alpha}{2\beta}\cdot \frac{4\alpha\beta}{(1-\alpha)(1-\beta)} \cdot \left(\frac{2}{1-\beta}\right)^{k} = \left(\frac{2}{1-\beta}\right)^{k+1}.$$
Thus, for $n$ there is no $m<n$ such that both inequalities (\ref{n1}) and (\ref{n2}) hold, a contradiction. 
Therefore, (\ref{on}) is satisfied.
Hence 
$$(1-\alpha)(1-\beta)\geq 4\alpha\beta,$$
and finally
$$\beta \leq \frac{1-\alpha}{1+3\alpha}.$$

\end{proof}
From Theorem \ref{tw2} we can also infer the equivalent condition to obtain an interval as an algebraic difference of middle-$\alpha$ and middle-$\beta$ Cantor sets if $\frac{\ln \frac{1-\alpha}{2}}{\ln \frac{1-\beta}{2}}$ is rational. This result has been also already proved by Pourbarat in \cite{P}. 

\begin{corollary} \label{wariq}
Let $a$ be a sequence with all terms equal to $\alpha$ and let $b$ be a sequence with all terms equal to $\beta$, where $\alpha, \beta \in (0,1)$ are such that $\frac{\ln \frac{1-\alpha}{2}}{\ln \frac{1-\beta}{2}} = \frac{n_0}{m_0},$ where $n_0,m_0$ are relatively prime and $n_0 \leq m_0$.
Then $C(a)-C(b) = [-1,1]$ if and only if there is $j \in \{0,1,\dots, m_0-1\}$ such that
\begin{equation}\label{warprz}
\left(\frac{1-\alpha}{2}\right)^\frac{j}{m_0} \in \left[\frac{2\beta}{1-\alpha}\cdot\left(\frac{1-\alpha}{2}\right)^\frac{1}{m_0},\frac{1-\beta}{2\alpha}\right].
\end{equation}
\end{corollary}
\begin{proof}
In the proof of Corollary \ref{WnP} we have already proved that $(***)$ holds for $n \geq K$, where $K$ is as in that proof, if and only if there is $m < n$ such that inequalities (\ref{n1}) and (\ref{n2}) are satisfied. Therefore, to finish the proof we need to show that\begin{itemize}
\item[a)] for every $n \geq K$ there is $m < n$ such that inequalities (\ref{n1}) and (\ref{n2}) hold if and only if there is $j< m_0$ for which (\ref{warprz}) is satisfied;

\item[b)] (\ref{warprz}) implies $(*)$ for all $n\in \N \cup \{0\}$;

\item[c)] (\ref{warprz}) implies that $(***)$ holds for all $n<K$.
\end{itemize} 

Ad a) By the assumption, $m_0\ln\frac{1-\alpha}{2} = n_0 \ln \frac{1-\beta}{2}$, and so,
$$n_0\ln\frac{2}{1-\beta} = m_0 \ln \frac{2}{1-\alpha}.$$
Since $n_0 \leq m_0,$ we have $\alpha \leq \beta$. Moreover, for any $n \in \N \cup \{0\}$ we have 
$$n \cdot \ln \frac{2}{1-\alpha} =_{mod \, \ln\frac{2}{1-\beta}} \frac{k}{m_0} \ln \frac{2}{1-\alpha} $$
for some $k \in \{0,1,\dots m_0-1\}$. Thus, for every $n\in \{0,1,\dots m_0-1\}$ there is $k \in \{0,1,\dots m_0-1\}$ such that
$$\ln\frac{2\beta}{1-\alpha}+_{mod \, \ln\frac{2}{1-\beta}} n\ln \frac{2}{1-\alpha} \in \left[\frac{k}{m_0}\cdot\ln\frac{2}{1-\alpha} ,\frac{k+1}{m_0}\cdot\ln\frac{2}{1-\alpha}\right).$$
Observe that (\ref{warprz}) is satisfied if and only if 
$$\left(\frac{2}{1-\alpha}\right)^\frac{j}{m_0} \in \left[\frac{2\alpha}{1-\beta}, \frac{1-\alpha}{2\beta}\cdot\left(\frac{2}{1-\alpha}\right)^\frac{1}{m_0}\right],$$
and so
$$\frac{2\beta}{1-\alpha}\cdot\left(\frac{2}{1-\alpha}\right)^\frac{j}{m_0} \in \left[\frac{4\alpha\beta}{(1-\beta)(1-\alpha)}, \left(\frac{2}{1-\alpha}\right)^\frac{1}{m_0}\right],$$
and this equivalent to
$$\ln\frac{2\beta}{1-\alpha}+\frac{j}{m_0}\ln\frac{2}{1-\alpha}\in \left[\ln\frac{4\alpha\beta}{(1-\beta)(1-\alpha)}, \frac{1}{m_0}\ln\frac{2}{1-\alpha}\right]$$
and finally to
$$\ln\frac{2\beta}{1-\alpha}+_{mod \,\ln\frac{2}{1-\beta}}k\cdot\ln\frac{2}{1-\alpha}\in \left[\ln\frac{4\alpha\beta}{(1-\beta)(1-\alpha)}, \frac{1}{m_0}\ln\frac{2}{1-\alpha}\right]$$
for some $k \in \{0,1,\dots,m_0-1\}.$
If $\beta \leq \frac{1-\alpha}{1+3\beta}$, then, by the same argument as in the proof of Corollary \ref{WnP}, $C(a)-C(b) = [-1,1]$. So, suppose that $\beta >\frac{1-\alpha}{1+3\beta}$.
Then $\frac{4\alpha\beta}{(1-\alpha)(1-\beta)}>1$, and so $\ln\frac{4\alpha\beta}{(1-\beta)(1-\alpha)} > 0$. 
Hence 
$\left[\ln\frac{4\alpha\beta}{(1-\beta)(1-\alpha)}, \frac{1}{m_0}\ln\frac{2}{1-\alpha}\right]\subset \left[0,\frac{1}{m_0}\ln\frac{2}{1-\alpha}\right].$
Of course, there is only one $k \in \{0,\dots,m_0-1\}$ such that
$$\ln\frac{2\beta}{1-\alpha}+_{mod \,\ln\frac{2}{1-\beta}}k\cdot \ln\frac{2}{1-\alpha}\in \left[\ln\frac{4\alpha\beta}{(1-\beta)(1-\alpha)}, \frac{1}{m_0}\ln\frac{2}{1-\alpha}\right),$$
so if (\ref{warprz}) is satisfied, then for any $n \in \{0,\dots,m_0-1\}$ we have 
$$\ln\frac{2\beta}{1-\alpha}+_{mod \,\ln\frac{2}{1-\beta}}n\cdot\ln\frac{2}{1-\alpha} \geq \ln \frac{4\alpha\beta}{(1-\beta)(1-\alpha)},$$
and thus for any $n \in \N\cup\{0\}$ there is $k \in \N\cup \{0\}$ such that
$$\frac{4\alpha\beta}{(1-\beta)(1-\alpha)}\cdot\left(\frac{2}{1-\beta}\right)^k \leq \frac{2\beta}{1-\alpha}\cdot\left(\frac{2}{1-\alpha}\right)^n \leq \left(\frac{2}{1-\beta}\right)^{k+1}$$
and
$$\frac{1}{\alpha} \cdot\left(\frac{2}{1-\alpha}\right)^n =\frac{1-\alpha}{2\alpha\beta}\cdot \frac{2\beta}{1-\alpha}\left(\frac{2}{1-\alpha}\right)^n\geq \frac{1-\alpha}{2\alpha\beta} \cdot \frac{4\alpha\beta}{(1-\beta)(1-\alpha)}\cdot\left(\frac{2}{1-\beta}\right)^k = \left(\frac{2}{1-\beta}\right)^{k+1}. $$
Therefore, for every $n \geq K$ there is $m < n$ such that inequalities (\ref{n1}) and (\ref{n2}) hold. 

Now, suppose that (\ref{warprz}) does not hold, but $C(a)-C(b) = [-1,1]$. Then condition $(*)$ holds for $n=0$, so $\alpha = a_1 \leq \frac{g_1}{d_0} =\frac{1-\beta}{2}$ or $\beta = b_1 \leq \frac{d_1}{g_0} =\frac{1-\alpha}{2}$. In both cases we obtain
$$\frac{4\alpha\beta}{(1-\alpha)(1-\beta)} < \frac{2}{1-\beta}.$$ 
So, there is $j \in \{0,\dots, m_0-1\}$ such that 
$$\ln\frac{2\beta}{1-\alpha}+_{mod \,\ln\frac{2}{1-\beta}}j\cdot\ln\frac{2}{1-\alpha}\in \left[0, \ln\frac{4\alpha\beta}{(1-\beta)(1-\alpha)}\right).$$ 
Hence for some $k \in \N \cup \{0\}$
$$\left(\frac{2}{1-\beta}\right)^k \leq \frac{2\beta}{1-\alpha}\cdot\left(\frac{2}{1-\alpha}\right)^j <\frac{4\alpha\beta}{(1-\beta)(1-\alpha)}\left(\frac{2}{1-\beta}\right)^k < \left(\frac{2}{1-\beta}\right)^{k+1}$$
and
$$\frac{1}{\alpha} \cdot\left(\frac{2}{1-\alpha}\right)^j \leq \frac{1-\alpha}{2\alpha\beta} \cdot \frac{4\alpha\beta}{(1-\beta)(1-\alpha)}\cdot\left(\frac{2}{1-\beta}\right)^k < \left(\frac{2}{1-\beta}\right)^{k+1}. $$
Therefore, there for all $n=j+i\cdot m_0,$ where $i \in \N \cup\{0\}$ there is no $m < n$ such that inequalities (\ref{n1}) and (\ref{n2}) hold, which finishes the proof of a).

Ad b) If $\tau(C(a))\cdot \tau(C(b)) \geq 1,$ then $(*)$ must be satisfied for all $n$, because $C(a)-C(b)=[-1,1]$. So, suppose that 
 $\tau(C(a))\cdot \tau(C(b)) < 1,$
that is,
$$\frac{(1-\alpha)(1-\beta)}{4\alpha\beta} < 1.$$
Hence
$$1-\alpha-\beta<3\alpha\beta,$$
and so
$$\alpha > \frac{1-\beta}{1+3\beta} .$$ 
If $\beta \leq \frac{1}{3}$, then $\tau(C(b))=\frac{1-\beta}{2\beta} \geq 1$. Because $\alpha < \beta$, then also $\tau(C(a)) \geq 1$, and so $\tau(C(a))\cdot \tau(C(b)) \geq 1,$ a contradiction. Hence $\beta > \frac{1}{3}$.
Therefore \begin{equation}\label{alfa}
\alpha > \frac{1-\beta}{1+3\beta} > \frac{1-\beta}{2} .
\end{equation}
Denote $I = \left[ \frac{2\beta}{1-\alpha}\cdot \left(\frac{1-\alpha}{2}\right)^\frac{1}{m_0}, \frac{1-\beta}{2\alpha}\right]$.
Using again the assumption that $\tau(C(a))\cdot \tau(C(b)) < 1,$ we obtain that $\frac{r(I)}{l(I)} < \left(\frac{2}{1-\alpha}\right)^\frac{1}{m_0}$, so there is only one $j$ satisfying (\ref{warprz}). By (\ref{alfa}) $\frac{1-\beta}{2\alpha}<1,$ so $j$ for which (\ref{warprz}) holds is greater than $0.$ We have 
$$\frac{2\beta}{1-\alpha}\cdot \left(\frac{1-\alpha}{2}\right)^\frac{1}{m_0} \leq \left(\frac{1-\alpha}{2}\right)^\frac{j}{m_0}, $$
and so
$$\frac{2\beta}{1-\alpha} \leq \left(\frac{1-\alpha}{2}\right)^\frac{n-1}{m_0} \leq 1.$$
Therefore, using the fact that $\alpha \leq \beta$, for any $n \in \N \cup \{0\}$ we have 
$$b_{n+1} = \beta \leq \frac{1-\alpha}{2}  \leq \left(\frac{1-\alpha}{1-\beta}\right)^n \cdot \frac{1-\alpha}{2} = \frac{d_{n+1}}{g_n},$$
so $(*)$ is satisfied for all $n \in \N \cup \{0\}.$

Ad c)
For $n=0$, $(***)$ holds always, because
$$L_1+g_1+d_1=d_0-d_1+g_1+d_1=1+g_1\geq 1 > M_1 = g_0-g_1=1-g_1.$$
By (\ref{warprz}), we have
$$\frac{1-\alpha}{2} \leq \frac{1-\beta}{2\alpha}.$$
Hence 
$$\frac{1-\beta}{1-\alpha}+1 -\alpha \geq 1,$$
and so
$$\frac{g_1}{d_1}+\frac{2d_2}{d_1} \geq 1,$$
and thus
$$g_1-g_2+g_2+d_2\geq d_1-d_2.$$
Therefore,
$$L_2+g_2+d_2\geq M_2,$$
so $(***)$ holds for $n=1$, with $m=0$. In particular, $m<n$ for $n=1$, so we can put $K=1$, which finishes the proof.

\end{proof}

There are still a lot of open questions left regarding the algebraic difference of central Cantor sets, but now we know exactly, when it is a finite union of intervals. Although there are some sufficient conditions for the algebraic difference of central Cantor sets to be a Cantor set or a Cantorval, it is far from the full characterization. Even in the case of middle Cantor sets there is still "a mysterious region" introduced by Solomyak in \cite{So}. From Corollaries \ref{WnP} and \ref{wariq} we now exactly know the area, where the difference of middle Cantor sets is an interval (a nice picture can be found in \cite{P}). However, most of the "mysterious region" from \cite{S} is still mysterious. 
It seems possible to use similar geometrical methods as in Theorem \ref{tw2} to find new conditions for the algebraic difference of central Cantor sets to be a Cantor set or a Cantorval. However, there appear some new difficulties which have to be considered.
The first step could be proving that the following statement holds.

\begin{conjecture}
If condition $(***)$ does not hold for almost all $n$, then $C(a)-C(b)$ is a Cantor set.
\end{conjecture}

If the above Conjecture is true, then it would be possible to easily find new examples of Cantor sets which algebraic difference is a Cantor set. For example, it would imply that the set $C(\frac{1}{2}) + C(\frac{1}{3})$ is a Cantor set.
\section*{Acknowledgements} The author would like to thank Tomasz Filipczak for many fruitful conversations during the preparation of the paper. The author would like also to express his gratitude to the anonymous referee for valuable remarks and suggestions which helped to develop the paper.

The author was supported by the GA \v{C}R project 20-22230L.


\end{document}